\newtheorem{theorem}{Theorem}[section]
\newtheorem{proposition}{Proposition}[section]
\newtheorem{lemma}{Lemma}[section]
\theoremstyle{definition}
\newtheorem{definition}{Definition}[section]
\newcommand{\C}{\mathscr{C}}
\newcommand{\ddbar}{\overline\partial}
\newcommand{\pr}{\partial}
\newcommand{\ol}{\overline}
\newcommand{\Td}{\widetilde}
\newcommand{\To}{\rightarrow}
\numberwithin{equation}{section}
\title{On the second coefficient of the asymptotic expansion of Boutet de Monvel--Sj\"ostrand}
\date{}
\author{Chin-Yu Hsiao}
\address{Institute of Mathematics, Academia Sinica and 
National Center for Theoretical Sciences,\newline
    \mbox{\quad}\,Astronomy-Mathematics Building, 
No. 1, Sec. 4, Roosevelt Road, Taipei 10617, Taiwan}
\email{chsiao@math.sinica.edu.tw or chinyu.hsiao@gmail.com}
\author[Wei-Chuan Shen]{Wei-Chuan Shen}
\address{Universit{\"a}t zu K{\"o}ln,  Mathematisches Institut,
    Weyertal 86-90,   50931 K{\"o}ln, Germany}
\email{wshen@math.uni-koeln.de}
\thanks{The first author was partially supported by Taiwan Ministry of Science and Technology projects 108-2115-M-001-012-MY5 and 109-2923-M-001-010-MY4 and Academia Sinica Career Development Award. The second author was supported by the SFB/TRR 191 "Symplectic Structures in Geometry, Algebra and Dynamics", funded by the DFG (Projektnummer 281071066 – TRR 191).}
\begin{document}
\begin{abstract}
In this paper, we calculate the second coefficient of the asymptotic expansion of Boutet de Monvel--Sj\"ostrand. 
\end{abstract}
\maketitle
\tableofcontents
\section{Introduction and the main result}

Let $(X,T^{1,0}X)$ be a CR manifold and let
$\overline{\partial}_b$ be the tangential Cauchy--Riemann operator on $X$. The orthogonal projection
$\Pi:L^2(X)\To{\rm Ker\,}\overline{\partial}_b$ is called the Szeg\H{o} projection, and we call
its distribution kernel $\Pi(x,y)$ the Szeg\H{o} kernel. The study of the Szeg\H{o}
kernel is a classical subject in several complex variables and CR geometry.
We recall the following classical result of Boutet de Monvel and Sj\"ostrand~\cite{BouSj76} about the description of the Szeg\H{o} kernel:

\begin{theorem}[Boutet de Monvel–Sj\"ostrand]\label{Boutet-Sjostrand-Hsiao formula}
Let $X$ be a compact orientable strongly pseudovoncex CR manifold of real dimension $2n+1$, $n\geq1$. 
Suppose that  $\overline{\partial}_b: {\rm Dom\,}\ddbar_b\subset L^2(X)\To L^2_{(0,1)}(X)$ has closed range. Let $D\subset X$ 
be an open coordinate patch with local coordinates $x=(x_1,\ldots,x_{2n+1})$. Then, 
\begin{equation}
    \label{Boutet-Sjostrand formula}
    \Pi(x,y)\equiv\int_0^\infty e^{it\phi(x,y)}a(x,y,t)dt\mod \mathscr{C}^\infty(D\times D),
\end{equation}
where 
\begin{equation}\label{e-gue201217yyd}
\begin{split}
&\phi(x,y)\in \C^\infty(D\times D),\ \  {\rm Im\,}\phi\geq0,\\
&\mbox{$\phi(x,y)=0$ if and only if $x=y$},\\
&\mbox{$d_x\phi(x,x)=-d_y\phi(x,x)=-\omega_0(x)$, for every $x\in D$},
\end{split}
\end{equation}
\begin{equation}\label{e-gue201217yydI}
\begin{split}
&\mbox{$a(x,y,t)\sim\sum^{+\infty}_{j=0}a_j(x,y)t^{n-j}$ in $S^n_{1,0}(D\times D\times\mathbb R_+)$},\\
&\mbox{$a_j(x,y)\in\C^\infty(D\times D)$, $j=0,1,\ldots$},\\
&\mbox{$a_0(x,x)\neq0$, for every $x\in D$},
\end{split}
\end{equation}
where $\omega_0\in\C^\infty(X,T^*X)$ is the global one form given by \eqref{e-gue201221yyd}.
\end{theorem}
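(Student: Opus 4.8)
The plan is to realize $\Pi$ as a Fourier integral operator with complex phase by microlocalizing near the characteristic variety of the Kohn Laplacian. Write $\Box_b^{(0)}=\ddbar_b^{\,*}\ddbar_b$ for the Kohn Laplacian on functions. Under the closed range hypothesis the Green operator $N$ of $\Box_b^{(1)}$ on $(0,1)$-forms is bounded, and one has the Kohn formula $\Pi=\mathrm{Id}-\ddbar_b^{\,*}N\ddbar_b$. The principal symbol of $\Box_b^{(0)}$ vanishes exactly on the two rays $\Sigma^\pm=\set{(x,\lambda\omega_0(x)):\pm\lambda>0}\subset T^*X\setminus 0$; off $\Sigma=\Sigma^+\cup\Sigma^-$ the operator is elliptic, so there $N$ is a classical pseudodifferential operator and $\Pi$ is smoothing. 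Strong pseudoconvexity makes the Levi form positive definite, which forces $\mathrm{Ker}\,\ddbar_b$ to concentrate microlocally on a single ray, say $\Sigma^+$. Thus the whole content of \eqref{Boutet-Sjostrand formula} is a microlocal parametrix for $\Pi$ attached to $\Sigma^+$, and it suffices to produce an operator $B\equiv\Pi\bmod\C^\infty(D\times D)$ of the stated form.

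First I would construct the phase. I look for $\phi\in\C^\infty(D\times D)$ with $\mathrm{Im}\,\phi\ge0$, solving the complex eikonal equation obtained by demanding that $\ddbar_{b,x}\bigl(e^{it\phi}a\bigr)$ vanish to infinite order along the diagonal. The vanishing of the leading term forces the frequency $d_x\phi(x,x)$ to lie on the ray carrying $\mathrm{Ker}\,\ddbar_b$, giving $d_x\phi(x,x)=-\omega_0(x)$, and by the self-adjointness $\Pi(x,y)=\ol{\Pi(y,x)}$, which translates into $\phi(x,y)=-\ol{\phi(y,x)}$, also $d_y\phi(x,x)=\omega_0(x)$. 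The Melin--Sj\"ostrand theory of almost analytic extensions then produces such a $\phi$, uniquely modulo the equivalence of phases that does not change the operator, with $\phi=0$ exactly on the diagonal. The positivity $\mathrm{Im}\,\phi\ge0$ is precisely where strong pseudoconvexity enters: the transverse Hessian of $\mathrm{Im}\,\phi$ at the diagonal is governed by the Levi form, whose positivity yields a nonnegative imaginary part together with nondegeneracy in the complex normal directions.

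Next I solve for the amplitude. Substituting $e^{it\phi}a$ with $a\sim\sum_j a_j t^{n-j}$ into $\Box_b^{(0)}$ and collecting powers of $t$ yields a hierarchy of transport equations along the complex bicharacteristics of $\Box_b$ over $\Sigma^+$. The top equation is a first order ODE that determines $a_0$ up to a nonvanishing factor; the normalizations $\Pi^*=\Pi$ and $\Pi^2=\Pi$ fix this factor and give $a_0(x,x)\ne0$, while the leading order $t^n$ is fixed by requiring $B$ to reproduce the correct on-diagonal singularity of a projection in real dimension $2n+1$, equivalently that $B$ act as the identity on its range. The lower equations solve successively for the $a_j\in\C^\infty(D\times D)$, and Borel's lemma assembles them into a symbol $a\in S^n_{1,0}(D\times D\times\R_+)$. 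This produces $B(x,y)=\int_0^\infty e^{it\phi}a\,dt$ satisfying \eqref{e-gue201217yyd}--\eqref{e-gue201217yydI}.

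The hard part will be the verification that $B\equiv\Pi$. One must show that $B$ is idempotent and self-adjoint modulo smoothing and has the same range as $\Pi$ modulo $\C^\infty$, so that the uniqueness of a projection with this microlocal structure forces $B\equiv\Pi$; the closed range hypothesis is what guarantees that $\Pi$ exists and is the unique such projector. The main technical obstacle is the composition calculus of Fourier integral operators with complex phase: computing $B\circ B$ and $\ddbar_b\circ B$ requires the complex stationary phase method of Melin--Sj\"ostrand, where one must check that the complex critical points of $\phi(x,w)+\phi(w,y)$ in $w$ are nondegenerate, that the contour can be deformed using the positivity of $\mathrm{Im}\,\phi$, and that the resulting expansion regenerates the phase $\phi$ and the full transport hierarchy. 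Establishing this positivity and nondegeneracy globally along $\Sigma^+$, rather than merely formally on the diagonal, is where the strong pseudoconvexity and the careful almost analytic construction of $\phi$ are indispensable.
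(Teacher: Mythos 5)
Your proposal addresses a statement the paper never proves: Theorem~\ref{Boutet-Sjostrand-Hsiao formula} is quoted as the classical result of Boutet de Monvel--Sj\"ostrand~\cite{BouSj76} (in the abstract, closed-range form established in~\cite{Hsiao10},~\cite{HM16}), and the paper's own work starts only afterwards, with the normalization and computation of the coefficients. So your sketch can only be measured against the known proofs, and at the level of architecture it does match them: microlocalization to the two rays $\Sigma^{\pm}$ over the characteristic variety, a complex phase with $\mathrm{Im}\,\phi\geq 0$ produced by an eikonal-type equation whose positivity comes from the Levi form, transport equations plus Borel summation for the amplitude, and a final identification of the parametrix with $\Pi$ using the closed-range hypothesis.

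There are, however, two genuine gaps. First, the identification step --- which you rightly call the hard part --- cannot be settled by appealing to ``uniqueness of a projection with this microlocal structure.'' As stated this is circular: $\Pi$ is a priori only an $L^2$-bounded orthogonal projection with no known wavefront or Sobolev regularity, and showing it ``has the same range as $B$ modulo $\C^\infty$'' \emph{is} the theorem. The known arguments close this loop by constructing, alongside $B$, an approximate partial inverse $G_0$ of $\Box_b=\ddbar_b^{\,*}\ddbar_b$ giving a microlocal Hodge decomposition $\Box_b G_0+B\equiv I$; comparing this with the exact decomposition $\Box_b G+\Pi=I$ furnished by closed range, and using that $B$ and $G_0$ are explicit operators bounded on the Sobolev scale, one deduces $\Pi\equiv B$. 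Your construction produces $B$ alone, so the chain of implications cannot be completed. Second, your microlocalization rests on the Kohn formula $\Pi=I-\ddbar_b^{\,*}N\ddbar_b$ together with good behavior of $N$. For $n\geq 2$ this is legitimate, since $\Box_b^{(1)}$ is subelliptic ($1\notin\{0,n\}$), but the theorem includes $n=1$, where $(0,1)$-forms have top degree: $\Box_b^{(1)}$ then fails to be hypoelliptic at one of the two rays, $N$ itself carries a Szeg\H{o}-type singular part, and statements like ``off $\Sigma$ the operator is elliptic, so $\Pi$ is smoothing there'' cannot be propagated through the Kohn formula without the very regularity theory one is trying to establish. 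The three-dimensional case genuinely requires either working directly with a microlocal partial inverse of $\Box_b^{(0)}$ as in~\cite{Hsiao10},~\cite{HM16}, or invoking Kohn's embeddability theorem and the original embedded-case argument of~\cite{BouSj76}.
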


We refer the reader to Sections~\ref{s-ssna},~\ref{s-gue201221yyd} for the setup, notations and terminology used in Theorem~\ref{Boutet-Sjostrand-Hsiao formula}. 
The Boutet de Monvel and Sj\"ostrand's description of the Szeg\H{o} kernel has profound impact in several complex variables, CR and complex geometry and geometric quantization theory, e.t.c. For example,  Catlin~\cite{Catlin1999BKE} and Zelditch~\cite{Zelditch1998BKE} established Bergman kernel asymptotic expansions for high power of positive holomorphic line bundles by using Theorem~\ref{Boutet-Sjostrand-Hsiao formula} (see also~\cite{HM14}). It should be mentioned that Tian \cite{Tian} obtained the leading term of Bergman kernel asymptotics for high power of positive line bundles by using peak section method. The first fewer coefficients of Bergman kernel asymptotic 
expansion were calculated by Lu~\cite{Lu} (see also~~\cite{HHL18},~\cite{Hsiao12}~\cite{Hsiao16},\cite{LuW},~\cite{MM06},~\cite{MM07}) and play an important role in K\"ahler geometry (see Donaldson~\cite{Don01}). On the other hand, the first coefficient at the diagonal of the expansion \eqref{e-gue201217yydI} is known, but the lower order terms of the expansion \eqref{e-gue201217yydI} were not yet known. Boutet de Monvel--Sj\"ostrand~\cite{BouSj76} (see also~\cite{Hsiao10}) showed that 
\begin{equation}\label{e-gue201221}
a_0(x,x)=\frac{1}{2\pi^{n+1}}\det\mathcal{L}_x,\ \ \mbox{at every $x\in X$},
\end{equation}
where $\det\mathcal{L}_x:=\mu_1(x)\mu_2(x)\cdots\mu_n(x)$, $\mu_j(x)$, $j=1,\ldots,n$, are the eigenvalues of the Levi form with respect to the given Hermitian metric on $\mathbb CTX$. If we take Levi metric on $\mathbb CTX$ (see \eqref{e-gue201221yydII}), then
\begin{equation}\label{e-gue201224yydq}
a_0(x,x)\equiv\frac{1}{2\pi^{n+1}}\ \ \mbox{on $D$}.
\end{equation}
It is a very natural question to calculate the lower order terms of the expansion \eqref{e-gue201217yydI}. The goal of this paper is to calculate $a_1(x,x)$ in \eqref{e-gue201217yydI}. It should be mentioned that the explicit formula of $a_1(x,x)$ has further applications in the study of CR Toeplitz quantization. 

We now formulate our main result. For some standard notations and terminology, we refer the reader to Sections~\ref{s-ssna},~\ref{s-gue201221yyd}. Let $(X,T^{1,0}X)$ be an orientable, compact strongly pseudoconvex CR manifold of dimension $2n+1$, $n\geq1$. Fix a global  non-vanishing $1$-form $\omega_0(x)\in\C^\infty(X,T^*X)$ so that 
\begin{equation}\label{e-gue201221yyd}
\begin{split}
&\mbox{$\omega_0(u)=0$, for every $u\in T^{1,0}X\oplus T^{0,1}X$},\\
&\mbox{$-\frac{1}{2i}d\omega_0|_{T^{1,0}X}$ is positive definite}. 
\end{split}
\end{equation}
For every $x\in X$, the Levi form at $x$ is the Hermitian quadratic form on $T^{1,0}_xX$ given by 
\begin{equation}\label{e-gue201221yydI}
\mbox{$\mathcal{L}_x(u, \ol v):=-\frac{1}{2i}\langle\,d\omega_0(x)\,,\,u\wedge\ol v\,\rangle$, for all $u, v\in T^{1,0}_xX$}. 
\end{equation}
Let $T\in\C^\infty(X,TX)$ be the global real vector field given by 
\begin{equation}\label{e-gue201222yyd}
\omega_0(T)\equiv -1,\ \ d\omega_0(T,\cdot)\equiv0. 
\end{equation}
The Levi form $\mathcal{L}_x$ induces a Hermitian metric (called Levi metric) $\langle\,\cdot\,|\,\cdot\,\rangle\,$ on $\mathbb CTX$ given by 
\begin{equation}\label{e-gue201221yydII}
\begin{split}
&\langle\,u\,|\,v\,\rangle=\mathcal{L}_x(u, \ol v),\ \ \mbox{for every $u, v\in T^{1,0}_xX$, $x\in X$},\\
&\langle\,\ol u\,|\,\ol v\,\rangle=\ol{\langle\,u\,|\,v\,\rangle},\ \ \mbox{for every $u, v\in T^{1,0}_xX$, $x\in X$},\\
&T^{1,0}X\perp T^{0,1}X,\ \ T\perp(T^{1,0}X\oplus T^{0,1}X),\\
&\langle\,T\,|\,T\,\rangle=1.
\end{split}
\end{equation}
Let $(\,\cdot\,|\,\cdot\,)$ be the $L^2$ inner product on $\Omega^{0,q}(X)$ induced by $\langle\,\cdot\,|\,\cdot\,\rangle$ and let $L^2_{(0,q)}(X)$ be the completion of $\Omega^{0,q}(X)$ with respect to 
$(\,\cdot\,|\,\cdot\,)$. We write $L^2(X):=L^2_{(0,0)}(X)$. Let 
\[\ddbar_b:\C^\infty(X)\To\Omega^{0,1}(X)\]
be the tangential Cauchy-Riemann operator (see \eqref{tangential Cauchy Riemann operator}) and we extend $\ddbar_b$ to $L^2$ space by 
\begin{equation}\label{e-gue201221ycds}
\begin{split}
&\ddbar_b: {\rm Dom\,}\ddbar_b\subset L^2(X)\To L^2_{(0,1)}(X),\\
&{\rm Dom\,}\ddbar_b:=\{u\in L^2(X);\, \ddbar_bu\in L^2_{(0,1)}(X)\}.
\end{split}
\end{equation} 
Let 
\[\Pi: L^2(X)\To{\rm Ker\,}\ddbar_b\]
be the orthogonal projection and let $\Pi(x,y)\in\mathscr D'(X\times X)$ be the distribution kernel of $\Pi$. 

Let $D\subset X$ be any open  open coordinate patch with local coordinates $x=(x_1,\ldots,x_{2n+1})$. For $b(x,y,t)\in S^n_{{\rm cl\,}}(D\times D\times\mathbb R_+)$, we write $b_0(x,y)$ and $b_1(x,y)$ to denote the first order term of $b$ and the second order term of $b$ respectively. Before we state our main result, it should be mentioned that the phase function $\phi$ in \eqref{Boutet-Sjostrand formula} is not unique and also the terms $a_0(x,y)$ and $a_1(x,y)$ are not unique. We can replace the phase function $\phi$ by $\hat\phi:=f\phi$, where $f$ is a smooth function with $f(x,x)=1$. Then, $\hat\phi$ satisfies \eqref{e-gue201217yyd} and $\phi$ and $\hat\phi$ are equivalent in the sense of Melin–Sj\"ostrand (see Melin–Sj\"ostrand~\cite[p. 172]{MS74}). 
When we change $\phi$ to $\hat\phi$ in \eqref{Boutet-Sjostrand formula}, the symbol $a(x,y,t)$ and  $a_0(x,y)$ and $a_1(x,y)$ will also be changed. Hence, $a_0(x,y)$ and $a_1(x,y)$ depend on the phase function and are not
unique. Even we fix $\phi$ in \eqref{Boutet-Sjostrand formula}, the functions $a_0(x,y)$ and $a_1(x,x)$ are not unique. For example, we can take 
\[\begin{split}
&\hat a_0(x,y):=a_0(x,y)+c(x,y)\phi(x,y),\ \ c(x,y)\in\C^\infty(D\times D),\\
&\hat a_1(x,y):=a_1(x,y)-nic(x,y). 
\end{split}\]
Set 
\[\mbox{$\hat a(x,y,t)\sim t^n\hat a_0(x,y)+t^{n-1}\hat a_1(x,y)+\sum^{+\infty}_{j=2}t^{n-j}a_j(x,y)$ in $S^n_{1,0}(D\times D\times\mathbb R_+)$}.\]
It is not difficult to see that 
\[\int_0^\infty e^{it\phi(x,y)}\hat a(x,y,t)dt\equiv \int_0^\infty e^{it\phi(x,y)}a(x,y,t)dt\mod\C^\infty(D\times D).\]
Hence, $a_0(x,y)$ and $a_1(x,x)$ are not unique.  

To  overcome the difficulty about the uniqueness for the symbols, we first look for some specific phase functions. 
Let $x=(x_1,\ldots,x_{2n+1})$ be local coordinates of $X$ defined on an open set $D$ of $X$ with $T=-\frac{\pr}{\pr x_{2n+1}}$ on $D$. By Malgrange preparation theorem~\cite[Theorem 7.5.5]{Hormander2003ALPDO1}, we have 
\[\phi(x,y)=f(x,y)(-x_{2n+1}+g(x',y))\ \ \mbox{on $D$},\]
where $\phi$ is as in \eqref{Boutet-Sjostrand formula}, $f, g\in\C^\infty(D\times D)$, $f(x,x)=1$, for every $x\in D$. Let 
\[\Phi:=-x_{2n+1}+g(x',y).\]
It is not difficult to see that $\Phi$ satisfies \eqref{e-gue201217yyd}, $\Phi$ and $\phi$ are equivalent in the sense of Melin–Sj\"ostrand. Moreover, we have 
\begin{equation}\label{e-gue201223yydy}
(T^2\Phi)(x,x)=0,\ \ \mbox{at every $x\in D$}. 
\end{equation}
Now we show that under
supplementary conditions, we have uniqueness (see Lemma~\ref{l-gue201224yyd} for a proof)

\begin{lemma}\label{l-gue201222yyd}
Let $D\subset X$ 
be any open coordinate patch with local coordinates $x=(x_1,\ldots,x_{2n+1})$.
Let $\phi_1, \phi_2\in\C^\infty(D\times D)$. Suppose that $\phi_1$, $\phi_2$ satisfy \eqref{e-gue201217yyd}, \eqref{e-gue201223yydy}, $\phi_1$ and $\phi$ are equivalent in the sense of Melin–Sj\"ostrand, $\phi_2$ and $\phi$ are equivalent in the sense of Melin–Sj\"ostrand, where $\phi$ is as 
in \eqref{Boutet-Sjostrand formula}. Suppose that 
\[\int^{+\infty}_0e^{i\phi_2(x,y)t}\alpha(x,y,t)dt\equiv\int^{+\infty}_0e^{i\phi_1(x,y)t}\beta(x,y,t)dt\mod\C^\infty(D\times D),\]
where $\alpha(x,y,t), \beta(x,y,t)\in S^n_{{\rm cl\,}}(D\times D\times\mathbb R_+)$ with 
\begin{equation}\label{e-gue201225yyd}
\begin{split}
&\alpha_0(x,x)=\beta_0(x,x),\ \ \mbox{for all $x\in D$},\\
&(T\alpha_0)(x,x)=(T\beta_0)(x,x)=0,\ \ \mbox{for all $x\in D$}.
\end{split}
\end{equation}
Then, we have 
\begin{equation}\label{e-gue201222yyda}
\alpha_1(x,x)=\beta_1(x,x),\ \ \mbox{for every $x\in D$}. 
\end{equation}
\end{lemma}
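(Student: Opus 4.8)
The plan is to reduce the comparison to a single phase function and then to invoke the symbol calculus for oscillatory integrals with the given complex phase. First I would use the Malgrange factorization already exploited in the excerpt: writing $\phi_1=f_1\Phi_1$ and $\phi_2=f_2\Phi_2$ with $\Phi_j=-x_{2n+1}+g_j(x',y)$ and $f_j(x,x)=1$, the hypothesis that $\phi_1$ and $\phi_2$ are both equivalent to $\phi$ in the sense of Melin--Sj\"ostrand forces $\Phi_1$ and $\Phi_2$ to agree to infinite order along the diagonal $x=y$, so that $\phi_2=f\phi_1$ modulo a term vanishing to infinite order at $x=y$, where $f:=f_2/f_1$ satisfies $f(x,x)=1$. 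A phase perturbation vanishing to infinite order along the diagonal does not change the oscillatory integral modulo $\C^\infty$, and the change of variables $s=f(x,y)t$ (a contour rotation justified by ${\rm Im}\,\phi_1\geq0$) turns $\int_0^\infty e^{i\phi_2 t}\alpha\,dt$ into $\int_0^\infty e^{i\phi_1 s}\tilde\alpha\,ds$ modulo $\C^\infty$, with $\tilde\alpha_j=\alpha_j f^{\,j-n-1}$ (this is a genuine classical symbol since $f$ is independent of $t$).

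The normalization \eqref{e-gue201223yydy} enters precisely here. Computing $T^2(f_j\Phi_j)$ on the diagonal and using $T\Phi_j\equiv1$, $(T^2\Phi_j)(x,x)=0$ gives $(T^2\phi_j)(x,x)=2(Tf_j)(x,x)$, so \eqref{e-gue201223yydy} is equivalent to $(Tf_j)(x,x)=0$, hence $(Tf)(x,x)=0$. Together with $f(x,x)=1$ this yields $\tilde\alpha_0(x,x)=\alpha_0(x,x)$, $\tilde\alpha_1(x,x)=\alpha_1(x,x)$ and, crucially, $(T\tilde\alpha_0)(x,x)=(T\alpha_0)(x,x)-(n+1)\alpha_0(x,x)(Tf)(x,x)=(T\alpha_0)(x,x)$. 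Thus the hypotheses \eqref{e-gue201225yyd} survive the reduction, and setting $\gamma:=\tilde\alpha-\beta\in S^n_{{\rm cl}}(D\times D\times\mathbb R_+)$ the problem becomes: if $\int_0^\infty e^{i\phi_1 t}\gamma\,dt\equiv0\bmod\C^\infty$ with $\gamma_0(x,x)=0$ and $(T\gamma_0)(x,x)=0$, then $\gamma_1(x,x)=0$.

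For the single--phase statement I would start from the integration by parts identity $\int_0^\infty e^{i\phi_1 t}(\phi_1 q-i\partial_t q)\,dt\equiv0\bmod\C^\infty$, valid for every $q\in S^n_{{\rm cl}}$ and obtained from $\partial_t e^{i\phi_1 t}=i\phi_1 e^{i\phi_1 t}$ (the boundary terms at $t=0,\infty$ being smooth). The key input is the converse: a symbol $\gamma$ with $\int_0^\infty e^{i\phi_1 t}\gamma\,dt\equiv0$ must be of the form $\gamma=\phi_1 q-i\partial_t q+R$ for some $q\in S^n_{{\rm cl}}$ and some $R$ vanishing to infinite order along the diagonal. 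Granting this, comparing the coefficients of $t^n$ and $t^{n-1}$ gives $\gamma_0=\phi_1 q_0+R_0$ and $\gamma_1=\phi_1 q_1-inq_0+R_1$; evaluating on the diagonal and using $\phi_1(x,x)=0$, $(T\phi_1)(x,x)=1$ together with the infinite order vanishing of $R_0,R_1$, I obtain $(T\gamma_0)(x,x)=q_0(x,x)$, hence $q_0(x,x)=0$, and therefore $\gamma_1(x,x)=-inq_0(x,x)=0$. Since $\gamma_1(x,x)=\alpha_1(x,x)-\beta_1(x,x)$, this is \eqref{e-gue201222yyda}.

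The hard part will be the converse characterization of the kernel of the map $\gamma\mapsto\int_0^\infty e^{i\phi_1 t}\gamma\,dt\bmod\C^\infty$, which is what forces $\gamma_0$ to be divisible by $\phi_1$ rather than merely to vanish on the diagonal. I expect to establish it by the complex stationary phase method of Melin--Sj\"ostrand: near each diagonal point the transverse Hessian of $\phi_1$ in the Cauchy--Riemann directions is nondegenerate because $-\frac1{2i}d\omega_0|_{T^{1,0}X}$ is positive (strong pseudoconvexity), so one solves the associated transport equations order by order to produce $q$ with $\gamma-(\phi_1 q-i\partial_t q)$ having symbol vanishing to all orders on the diagonal. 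The remaining technical points are the justification of the contour deformation in $s=ft$ and of the negligibility modulo $\C^\infty$ of an infinite order diagonal perturbation of the phase; both are standard but must be verified in the present half--line ($\mathbb R_+$) integral setting.
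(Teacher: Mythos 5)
Your reduction and your endgame coincide with the skeleton of the paper's own proof (Lemma~\ref{l-gue201224yyd}): factor $\phi_2=f\phi_1+O(|x-y|^\infty)$ as in \cite[Section 8]{HM16}, use \eqref{e-gue201223yydy} to normalize $f$ along the diagonal, absorb $f$ into the symbol (your substitution $s=ft$, giving $\tilde\alpha_j=\alpha_j f^{\,j-n-1}$, is exactly what the paper's Lemma~\ref{lemma of osc} delivers), and then read off the first two coefficients on the diagonal. Two remarks on details. First, your claim that Melin--Sj\"ostrand equivalence forces the two Malgrange normal forms $\Phi_1,\Phi_2$ to agree to infinite order on the diagonal is itself unproved; but it is also unnecessary, since for \emph{any} factorization $\phi_2=f\phi_1+O(|x-y|^\infty)$ one gets, using $\phi_1(x,x)=0$, that $(T^2\phi_2)(x,x)=2(Tf)(x,x)(T\phi_1)(x,x)+(T^2\phi_1)(x,x)$, and $(T\phi_1)(x,x)=1$, so \eqref{e-gue201223yydy} for both phases gives $(Tf)(x,x)=0$ directly, without identifying $f$ with $f_2/f_1$. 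Second, your computations $(T\tilde\alpha_0)(x,x)=(T\alpha_0)(x,x)$, $q_0(x,x)=(T\gamma_0)(x,x)=0$ and $\gamma_1(x,x)=-inq_0(x,x)=0$ are correct as algebra.

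The genuine gap is the step you yourself flag as the hard part: the converse characterization that $\int_0^\infty e^{i\phi_1t}\gamma\,dt\equiv0\bmod\C^\infty$ forces $\gamma=\phi_1q-i\partial_tq+R$ with $R$ negligible. You propose to prove it by Melin--Sj\"ostrand complex stationary phase and transport equations, invoking nondegeneracy of the transverse Hessian of $\phi_1$ in the CR directions; that method does not apply to this statement. The only integration variable is $t$, and the phase $t\phi_1(x,y)$ is linear in $t$: it has no critical points ($\partial_t(t\phi_1)=\phi_1\neq0$ off the diagonal), hence no Hessian in the integrated variable and no transport hierarchy, and the Levi-form positivity is irrelevant to a one-dimensional $dt$-integral. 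What is actually needed --- and what the paper does --- is elementary: expand each term $\int_0^\infty e^{i\phi_1t}t^{n-j}\,dt$ by \eqref{Gamma function} into multiples of $(-i(\phi_1+i0))^{j-n-1}$ (with logarithmic terms for $j\geq n+1$); smoothness of the total kernel then forces a pointwise identity of the type \eqref{e-gue201228yydVII}, which in your reduced single-phase setting reads $\sum_{j=0}^n(n-j)!\,\gamma_j(-i\phi_1)^j=(-i\phi_1)^{n+1}S$ with $S$ smooth. This identity already exhibits $n!\,\gamma_0$ as $-i\phi_1$ times an explicit smooth function, which is the only consequence of your ideal-membership claim that your diagonal algebra uses; the paper finishes equivalently by restricting the identity to the curve $x=(0,x_{2n+1})$, $y=0$ and matching Taylor coefficients. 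So your argument closes once the unproved structural lemma is replaced by this Laplace-type expansion and division argument; as proposed, the stationary-phase route would not produce it.
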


From Lemma~\ref{l-gue201222yyd}, we see that if we can choose the leading term in the expansion \eqref{e-gue201217yydI} satisfying \eqref{e-gue201225yyd} for all equivalent phase functions $\phi$ satisfying \eqref{e-gue201217yyd}, \eqref{e-gue201223yydy}, then the second coefficient of the expansion \eqref{e-gue201217yydI} at the diagonal is is uniquely determined. This is possible by Lemma~\ref{a_0 is independent of x_2n+1} below. Moreover, we will show in Lemma~\ref{a_0 is independent of x_2n+1} that we can take the  leading term in the expansion \eqref{e-gue201217yydI} as a function $\frac{1}{2\pi^{n+1}}+r(x,y)$, $r=O(|x-y|)$, $Tr=0$,  for all equivalent phase functions $\phi$ satisfying \eqref{e-gue201217yyd}, \eqref{e-gue201223yydy}, and hence the second coefficient of the expansion \eqref{e-gue201217yydI} at the diagonal is well-defined. 
The main result of this work is the following  

\begin{theorem}\label{main theorem}
With the notations and assumptions above, suppose that  $\overline{\partial}_b: {\rm Dom\,}\ddbar_b\subset L^2(X)\To L^2_{(0,1)}(X)$ has closed range. 
Let $D\subset X$ 
be any open coordinate patch with local coordinates $x=(x_1,\ldots,x_{2n+1})$. Let $\hat\phi\in\C^\infty(D\times D)$. Suppose that $\hat\phi$ satisfies \eqref{e-gue201217yyd}, \eqref{e-gue201223yydy} and $\phi$ and $\hat\phi$ are equivalent in the sense of Melin–Sj\"ostrand, where $\phi$ is as in \eqref{Boutet-Sjostrand formula}. Then, we can find $A(x,y,t)\in S^n_{{\rm cl\,}}(D\times D\times\mathbb R_+)$ with 
\begin{equation}\label{e-gue201221yyda}
\begin{split}
&\mbox{$A_0(x,x)=\frac{1}{2\pi^{n+1}}$, for every $(x,y)\in D\times D$}, \\
&TA_0=0\ \ \mbox{on $D$},
\end{split}
\end{equation}
\begin{equation}\label{e-gue201221yydb}
\mbox{$A_1(x,x)=\frac{1}{4\pi^{n+1}}R_{\mathrm{scal}}(x)$, for every $x\in D$}, 
\end{equation}
such that 
\begin{equation}\label{e-gue201221yydc}
\Pi(x,y)\equiv\int_0^\infty e^{it\hat\phi(x,y)}A(x,y,t)dt\mod\C^\infty(D\times D),
\end{equation}
where $R_{\mathrm{scal}}$ is the Tanaka--Webster scalar curvature on $X$ (see \eqref{Tanaka--Webster scalar curvature}).
\end{theorem}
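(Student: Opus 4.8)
The plan is to reduce the computation to a single point and a single convenient parametrix, and then to extract $A_1(x,x)$ from the transport equations satisfied by the symbol of the Szeg\H{o} projection. First I would observe that, by Lemma~\ref{l-gue201222yyd} together with the normalization provided by Lemma~\ref{a_0 is independent of x_2n+1}, it suffices to produce one parametrix $\int_0^\infty e^{it\hat\phi}A\,dt$ for $\Pi$ whose leading symbol satisfies $A_0(x,x)=\tfrac{1}{2\pi^{n+1}}$ and $TA_0=0$; for any such parametrix the diagonal value $A_1(x,x)$ is forced to be the same. Since $R_{\mathrm{scal}}$ is a pointwise pseudohermitian invariant, it is then enough to verify \eqref{e-gue201221yydb} at an arbitrary fixed point $p\in X$. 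I would therefore fix $p$ and choose coordinates adapted to the Tanaka--Webster structure --- e.g.\ the BRT (Baouendi--Rothschild--Treves) trivialization together with normal coordinates centered at $p$, compatible with $T=-\partial/\partial x_{2n+1}$ --- in which the Levi metric is Euclidean at $p$, the connection coefficients vanish at $p$, and the Tanaka--Webster curvature appears in the second-order Taylor coefficients of the structure. In these coordinates $\ddbar_b$, its formal adjoint, and the Kohn Laplacian $\Box_b^{(0)}=\ddbar_b^{*}\ddbar_b$ admit explicit Taylor expansions up to the order needed to see $A_1$.

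The main engine is the characterization of $\Pi$ by $\Box_b^{(0)}\Pi\equiv0\bmod\C^\infty(D\times D)$, which, after plugging in the oscillatory integral $\int_0^\infty e^{it\hat\phi}A\,dt$ and applying complex stationary phase, becomes a hierarchy of transport equations for the homogeneous components $A_0,A_1,\ldots$ of $A$. Organizing $\Box_b^{(0)}(e^{it\hat\phi}A)$ by powers of $t$, the top order reproduces the eikonal equation and the compatibility of $\hat\phi$ with the characteristic variety $\Sigma=\{(x,s\omega_0(x)):s>0\}$ (already guaranteed by \eqref{e-gue201217yyd}), the next order is a first-order transport equation that, together with the normalization of Lemma~\ref{a_0 is independent of x_2n+1} and \eqref{e-gue201224yydq}, recovers $A_0(x,x)=\tfrac{1}{2\pi^{n+1}}$, and the following order is an inhomogeneous first-order transport equation for $A_1$ whose inhomogeneity is governed by the subprincipal symbol of $\Box_b^{(0)}$ acting on $A_0$. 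The free (homogeneous) solution of this last equation is exactly the ambiguity $\hat a_1=a_1-nic$ described before the statement of the theorem; it is pinned down by imposing the normalization \eqref{e-gue201221yyda} on $A_0$, and, where needed, by the idempotency $\Pi^2\equiv\Pi$ and self-adjointness $\Pi^{*}\equiv\Pi$ of the projection, which fix the remaining diagonal constant.

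I expect the decisive step to be the explicit computation of the subprincipal symbol of $\Box_b^{(0)}$ in the adapted coordinates and the solution of the transport equation for $A_1$ along the diagonal. This is where the Tanaka--Webster connection enters: the first-order coefficients of the structure combine into the transport vector field and the half-trace of the Hessian of the phase $\hat\phi$, while the genuinely second-order data assemble into the curvature of the Tanaka--Webster connection. Taking the relevant trace converts this curvature into the scalar curvature $R_{\mathrm{scal}}$, and after evaluating at $p$ the undetermined constants drop out, leaving $A_1(p,p)=\tfrac{1}{4\pi^{n+1}}R_{\mathrm{scal}}(p)$. The principal obstacle, and the bulk of the work, is the careful bookkeeping of all connection and curvature contributions in this subprincipal computation, and verifying that the curvature terms which survive on the diagonal genuinely organize into $R_{\mathrm{scal}}$ rather than a different pseudohermitian invariant; by contrast, the reduction steps and the solution of the transport ODE are comparatively routine once the normal form of $\Box_b^{(0)}$ is in hand.
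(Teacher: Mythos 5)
Your reduction steps are exactly the paper's: Lemma~\ref{l-gue201222yyd} plus the normalization of Lemma~\ref{a_0 is independent of x_2n+1} reduce everything to computing $A_1(p,p)$ for one well-normalized parametrix. The gap is in your computational engine. The characterization $\Box^{(0)}_b\Pi\equiv 0$ carries exactly the same information as the first-order system $\ddbar_b\Pi\equiv0$ (since $(\ddbar_b^{*}\ddbar_b u\,|\,u)=\norm{\ddbar_b u}^2$), and this system is degenerate precisely along the cone parametrized by $\hat\phi$: plugging in the oscillatory integral does \emph{not} produce a WKB hierarchy with a transport ODE that propagates $A_1$ along the diagonal. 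What these equations (together with $\Pi^{*}\equiv\Pi$) actually yield is control of the antiholomorphic derivatives $\ol L_{j,x}A_k$ and the conjugate derivatives in $y$ near the diagonal --- this is exactly what the paper extracts in Theorem~\ref{a_0 is a constant} to show $a_0\equiv\frac{1}{2\pi^{n+1}}$ to infinite order --- and they impose \emph{no} constraint on the diagonal values $A_1(x,x)$ themselves. Concretely, for any self-adjoint pseudodifferential operator $R$ of order $-1$, the operator $K=\Pi+\Pi R\Pi$ is annihilated by $\ddbar_b$, is self-adjoint, admits the same phase and the same normalized leading symbol, yet its second coefficient on the diagonal differs from $A_1(x,x)$ by an essentially arbitrary smooth function. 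So the ambiguity left by your transport equations is a \emph{function}, not a constant, and idempotency cannot be relegated to ``fixing the remaining diagonal constant'': $\Pi^2\equiv\Pi$ is the entire mechanism that determines $A_1(x,x)$, and it has to be implemented quantitatively. Likewise, there is no ``subprincipal symbol of $\Box^{(0)}_b$'' step that can substitute for it.

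This is what the paper does instead. Closed range gives local CR embeddability (Boutet de Monvel), and H\"ormander's normal form \eqref{e-gue201226ycdIV} produces the Boutet de Monvel--Sj\"ostrand phase with $\ddbar_{b,x}\phi$ vanishing to infinite order on the diagonal \eqref{e-gue201226ycdb}; Theorem~\ref{a_0 is a constant} then makes $A_0$ constant to infinite order there. (Note, incidentally, that your proposed BRT coordinates need not exist on a general strongly pseudoconvex CR manifold --- BRT requires a transverse CR symmetry --- so the embedding route is not optional.) With this preparation, expanding $\Pi\equiv\Pi\circ\Pi$ by complex stationary phase gives a composed symbol $B$ with $B_1(0,0)=C+2A_1(0,0)$, where $C$ is explicit in the fourth derivatives of the defining function and the Laplacian of the volume density, while Lemma~\ref{l-gue201228yyd} (which is where idempotency enters quantitatively) forces $B_1(0,0)=A_1(0,0)$; hence $A_1(0,0)=-C$ as in \eqref{e-gue201228ycdb}, and the Tanaka--Webster computation identifies $-C=\frac{1}{4\pi^{n+1}}R_{\mathrm{scal}}(0)$. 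If you repair your plan by replacing ``transport ODE plus subprincipal symbol'' with a quantitative expansion of $\Pi^2\equiv\Pi$, you will have reconstructed the paper's proof; as written, the decisive step of your proposal would not produce $A_1(p,p)$.
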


\section{Preliminaries} \label{s:prelim}
\subsection{Standard notations and some background} \label{s-ssna}
We use the following notations through this article: $\mathbb N=\{1,2,\ldots\}$ is the set of natural numbers, $\mathbb N_0=\mathbb N\bigcup\{0\}$, $\mathbb R$ is the set of 
real numbers, $\overline{\mathbb R}_+=\{x\in\mathbb R;\, x\geq0\}$. 
We write $\alpha=(\alpha_1,\ldots,\alpha_n)\in\mathbb N^n_0$ 
if $\alpha_j\in\mathbb N_0$, 
$j=1,\ldots,n$. For $x=(x_1,\ldots,x_n)\in\mathbb R^n$, 
we write
\[
\begin{split}
&x^\alpha=x_1^{\alpha_1}\ldots x^{\alpha_n}_n,\\
& \pr_{x_j}=\frac{\pr}{\pr x_j}\,,\quad
\pr^\alpha_x=\pr^{\alpha_1}_{x_1}\ldots\pr^{\alpha_n}_{x_n}
=\frac{\pr^{|\alpha|}}{\pr x^\alpha}\,.
\end{split}
\]
Let $z=(z_1,\ldots,z_n)$, $z_j=x_{2j-1}+ix_{2j}$, $j=1,\ldots,n$, 
be coordinates of $\mathbb C^n$. We write
\[
\begin{split}
&z^\alpha=z_1^{\alpha_1}\ldots z^{\alpha_n}_n\,,
\quad\ol z^\alpha=\ol z_1^{\alpha_1}\ldots\ol z^{\alpha_n}_n\,,\\
&\pr_{z_j}=\frac{\pr}{\pr z_j}=
\frac{1}{2}\Big(\frac{\pr}{\pr x_{2j-1}}-i\frac{\pr}{\pr x_{2j}}\Big)\,,
\quad\pr_{\ol z_j}=\frac{\pr}{\pr\ol z_j}
=\frac{1}{2}\Big(\frac{\pr}{\pr x_{2j-1}}+i\frac{\pr}{\pr x_{2j}}\Big),\\
&\pr^\alpha_z=\pr^{\alpha_1}_{z_1}\ldots\pr^{\alpha_n}_{z_n}
=\frac{\pr^{|\alpha|}}{\pr z^\alpha}\,,\quad
\pr^\alpha_{\ol z}=\pr^{\alpha_1}_{\ol z_1}\ldots\pr^{\alpha_n}_{\ol z_n}
=\frac{\pr^{|\alpha|}}{\pr\ol z^\alpha}\,.
\end{split}
\]
For $j, s\in\mathbb Z$, set $\delta_{js}=1$ if $j=s$, 
$\delta_{js}=0$ if $j\neq s$.
 
Let $X$ be a $\C^\infty$ paracompact manifold.
We let $TX$ and $T^*X$ denote the tangent bundle of $X$
and the cotangent bundle of $X$ respectively.
The complexified tangent bundle of $X$ and the complexified cotangent bundle of $X$ will be denoted by $\mathbb CTX$
and $\mathbb C T^*X$, respectively. Write $\langle\,\cdot\,,\cdot\,\rangle$ to denote the pointwise
duality between $TX$ and $T^*X$.
We extend $\langle\,\cdot\,,\cdot\,\rangle$ bilinearly to $\mathbb CTX\times\mathbb C T^*X$.

Let $D\subset X$ be an open set . The spaces of distributions of $D$ and
smooth functions of $D$ will be denoted by $\mathscr D'(D)$ and $\C^\infty(D)$ respectively.
Let $\mathscr E'(D)$ be the subspace of $\mathscr D'(D)$ whose elements have compact support in $D$. 
Let $\C^\infty_c(D)$ be the subspace of $\C^\infty(D)$ whose elements have compact support in $D$.
Let $A: \C^\infty_c(D)\rightarrow \mathscr D'(D)$ be a continuous operator. We write $A(x, y)$ to denote the distribution kernel of $A$.
In this work, we will identify $A$ with $A(x,y)$. 
The following two statements are equivalent
\begin{enumerate}
\item $A$ is continuous: $\mathscr E'(D)\rightarrow\C^\infty(D)$,
\item $A(x,y)\in\C^\infty(D\times D)$.
\end{enumerate}
If $A$ satisfies (1) or (2), we say that $A$ is smoothing on $D$. Let
$A,B: \C^\infty_c(D)\rightarrow\mathscr D'(D)$ be continuous operators.
We write 
\begin{equation} \label{e-gue201114yyd}
\mbox{$A\equiv B$ on $D$} 
\end{equation}
or 
\begin{equation} \label{e-gue201114yydz}
A(x,y)\equiv B(x,y)\mod\C^\infty(D\times D)
\end{equation}
if $A-B$ is a smoothing operator. We sometimes will omit "on $D$" or "$\mod\C^\infty(D\times D)$" in \eqref{e-gue201114yyd} and \eqref{e-gue201114yydz} respectively. 
We say that $A$ is properly supported if the restrictions of the two projections 
$(x,y)\rightarrow x$, $(x,y)\rightarrow y$ to ${\rm Supp\,}K_A$
are proper. 

Let $X$ be a smooth orientable manifold of real dimension $2n+1$. Let $D$ be an open coordinate patch of $X$ with local coordinates $x$. We recall the following H\"ormander symbol space

\begin{definition}\label{d-gue201114yyd}
For $m\in\mathbb R$, $S^m_{1,0}(D\times D\times\mathbb{R}_+)$ 
is the space of all $a(x,y,t)\in\C^\infty(D\times D\times\mathbb{R}_+)$ 
such that for all compact $K\Subset D\times D$ and all $\alpha, 
\beta\in\mathbb N^{2n+1}_0$, $\gamma\in\mathbb N_0$, 
there is a constant $C_{\alpha,\beta,\gamma}>0$ such that 
\[|\partial^\alpha_x\partial^\beta_y\partial^\gamma_t a(x,y,t)|\leq 
C_{\alpha,\beta,\gamma}(1+|t|)^{m-|\gamma|},\ \ 
\mbox{for all $(x,y,t)\in K\times\mathbb R_+$, $t\geq1$}.\]
Put 
\[
S^{-\infty}(D\times D\times\mathbb{R}_+)
:=\bigcap_{m\in\mathbb R}S^m_{1,0}(D\times D\times\mathbb{R}_+).\]
Let $a_j\in S^{m_j}_{1,0}(D\times D\times\mathbb{R}_+)$, 
$j=0,1,2,\ldots$ with $m_j\rightarrow-\infty$, $j\rightarrow\infty$. 
Then there exists $a\in S^{m_0}_{1,0}(D\times D\times\mathbb{R}_+)$ 
unique modulo $S^{-\infty}$, such that 
$a-\sum^{k-1}_{j=0}a_j\in S^{m_k}_{1,0}(D\times D\times\mathbb{R}_+)$ 
for $k=1,2,\ldots$. 

If $a$ and $a_j$ have the properties above, we write $a\sim\sum^{\infty}_{j=0}a_j$ in 
$S^{m_0}_{1,0}(D\times D\times\mathbb{R}_+)$. We write
\begin{equation}  \label{e-gue201114yydI}
s(x, y, t)\in S^{m}_{{\rm cl\,}}(D\times D\times\mathbb{R}_+)
\end{equation}
if $s(x, y, t)\in S^{m}_{1,0}(D\times D\times\mathbb{R}_+)$ and 
\begin{equation}\label{e-gue201114yydII}
\begin{split}
&s(x, y, t)\sim\sum^\infty_{j=0}s_j(x, y)t^{m-j}\text{ in }S^{m}_{1, 0}
(D\times D\times\mathbb{R}_+)\,,\\
&s_j(x, y)\in\C^\infty(D\times D),\ j\in\mathbb N_0.
\end{split}\end{equation}
We sometimes omit "in $S^{m}_{1, 0}
(D\times D\times\mathbb{R}_+)$" in \eqref{e-gue201114yydII}. 
\end{definition} 

To calculate the first order term of Szeg\H{o} kernel expansion explicitly, we also need the following version of stationary phase formula~\cite[Theorem 7.7.5]{Hormander2003ALPDO1}

\begin{theorem}
\label{Hormander stationary phase formula}
Let $D\subset\mathbb{R}^n$ be an open set,  $K\subset D$ be a compact set, $F\in \mathscr{C}^\infty(D)$, $\mathrm{Im}F\geq 0$ in $D$. Assume
\[
\mathrm{Im}F(0)=0,~F'(0)=0,~\det~F''(0)\neq 0,~F'\neq 0~\text{in}~K\setminus\{0\}.
\]
Let $u\in\C^\infty_c(D)$, ${\rm Supp\,}u\subset K$, 
Then, for any $k>0$,
\[
\left|\int e^{ikF(z)}u(x)dx-e^{ikF(0)}\det\left(\frac{kF''(0)}{2\pi i}\right)^{-\frac{1}{2}}\sum_{j<N}k^{-j}P_j u\right|\leq C k^{-N}\sum_{|\alpha|\leq N}\sup_K|\partial_x^\alpha u|.
\]
Here, $C$ is a bounded constant when $F$ is bounded in $\mathscr{C}^\infty(D)$, $\frac{|x|}{|F'(x)|}$ has a uniform bound and
\[
P_ju:=\sum_{v-\mu=j}\sum_{2v\geq 3\mu}i^{-j}2^{-v}\langle F''(0)^{-1}D,D\rangle^v\frac{(h^\mu u)(0)}{v!\mu!}.
\]
$h(x):=F(x)-F(0)-\frac{1}{2}\langle F''(0)x,x\rangle$,
$D=
\begin{pmatrix}
-i\partial_{x_1}\\
\vdots\\
-i\partial_{x_n}
\end{pmatrix}$.
\end{theorem}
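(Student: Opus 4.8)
The plan is to follow Hörmander's two-step strategy: first establish the formula for a nondegenerate quadratic phase, where everything can be computed exactly on the Fourier side, and then reduce the general phase to the quadratic model by Taylor expanding its non-quadratic part. The bookkeeping that matches the two error sources is where the real work lies.

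\emph{Step 1: the purely quadratic phase.} First I would treat $F(x)=\tfrac12\langle Ax,x\rangle$ with $A=F''(0)$ symmetric, $\operatorname{Im}A\geq0$, $\det A\neq0$. The key input is the exact Gaussian Fourier transform
\[
\int_{\R^n}e^{\frac{i}{2}\langle Ax,x\rangle+i\langle x,\xi\rangle}\,dx=\det\!\Big(\frac{A}{2\pi i}\Big)^{-\frac12}e^{-\frac{i}{2}\langle A^{-1}\xi,\xi\rangle},
\]
where the branch of the square root is pinned by continuity from the positive-definite case $A=iI$ (deforming $A\rightsquigarrow A+i\varepsilon I$ keeps the integral absolutely convergent and the determinant branch unambiguous). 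Writing $u(x)=(2\pi)^{-n}\int\hat u(\xi)e^{i\langle x,\xi\rangle}\,d\xi$, inserting Parseval, and replacing $A$ by $kA$ gives
\[
\int e^{ikF(x)}u(x)\,dx=\det\!\Big(\frac{kA}{2\pi i}\Big)^{-\frac12}(2\pi)^{-n}\int\hat u(\xi)\,e^{-\frac{i}{2k}\langle A^{-1}\xi,\xi\rangle}\,d\xi.
\]
Taylor expanding $e^{-\frac{i}{2k}\langle A^{-1}\xi,\xi\rangle}$ in powers of $1/k$ and transforming back termwise (each monomial $\langle A^{-1}\xi,\xi\rangle^{j}$ becomes $\langle A^{-1}D,D\rangle^{j}$ acting on $u$ at $0$, since $(2\pi)^{-n}\int\xi^\beta\hat u(\xi)\,d\xi=(D^\beta u)(0)$) produces the expansion with $P_ju=\tfrac{1}{j!}\big(\tfrac{1}{2i}\langle A^{-1}D,D\rangle\big)^{j}u(0)$, which is exactly the $\mu=0$ part of the stated $P_j$. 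The remainder is bounded by $\int|\xi|^{2N}|\hat u(\xi)|\,d\xi$, hence by $\sum_{|\alpha|\leq N+n+1}\sup_K|\pr^\alpha u|$ via $|\hat u(\xi)|\leq C(1+|\xi|)^{-n-1}\sum_{|\alpha|\leq n+1}\sup|\pr^\alpha u|$.

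\emph{Step 2: localization and reduction.} For general $F$, the hypotheses $F'(0)=0$, $\det F''(0)\neq0$, $F'\neq0$ on $K\setminus\{0\}$ force $0$ to be the unique critical point. Choosing $\chi\in\C^\infty_c$ equal to $1$ near $0$ with small support, I would split $u=\chi u+(1-\chi)u$. On $\operatorname{supp}(1-\chi)u$ we have $F'\neq0$, and repeated integration by parts against $L=\tfrac1{ik}\tfrac{\langle\ol{\pr F},\pr\rangle}{|\pr F|^2}$, which satisfies $Le^{ikF}=e^{ikF}$, shows $\int e^{ikF}(1-\chi)u\,dx=O(k^{-\infty})$ uniformly, precisely because $|x|/|F'(x)|$ is bounded and $\operatorname{Im}F\geq0$ keeps $e^{ikF}$ bounded; this contributes to no $P_j$. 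Near $0$ write $F=F(0)+Q+h$ with $Q=\tfrac12\langle F''(0)x,x\rangle$ and $h=O(|x|^3)$, factor $e^{ikF}=e^{ikF(0)}e^{ikQ}e^{ikh}$, and expand $e^{ikh}=\sum_{\mu<M}\tfrac{(ik)^\mu}{\mu!}h^\mu+R_M$. Applying Step 1 to each amplitude $h^\mu(\chi u)$,
\[
\frac{(ik)^\mu}{\mu!}\int e^{ikQ}h^\mu(\chi u)\,dx\sim\det\!\Big(\frac{kA}{2\pi i}\Big)^{-\frac12}\sum_{v}\frac{(ik)^\mu\,k^{-v}}{\mu!\,v!}\Big(\frac{1}{2i}\langle A^{-1}D,D\rangle\Big)^{v}\!(h^\mu\chi u)(0),
\]
where $A=F''(0)$. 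Since $h$ vanishes to order $3$, $\langle A^{-1}D,D\rangle^{v}(h^\mu u)(0)=0$ unless $2v\geq3\mu$, and the net power of $k$ is $k^{\mu-v}=k^{-(v-\mu)}$. Setting $j=v-\mu$, collecting all pairs $(\mu,v)$ with $v-\mu=j$ and $2v\geq3\mu$ (a finite sum, as then $\mu\leq2j$), and simplifying $\tfrac{i^\mu}{\mu!v!}(\tfrac{1}{2i})^{v}=\tfrac{i^{-j}2^{-v}}{\mu!v!}$ reproduces
\[
P_ju=\sum_{v-\mu=j,\;2v\geq3\mu}i^{-j}2^{-v}\langle A^{-1}D,D\rangle^{v}\frac{(h^\mu u)(0)}{v!\,\mu!},
\]
which is exactly the asserted formula.

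\emph{Main obstacle.} The delicate point is the uniform remainder estimate, since the amplitude $e^{ikh}u$ carries its own $k$-dependence, so two errors must be balanced: truncating the Taylor series of $e^{ikh}$ at order $M$, and truncating the quadratic expansion of Step 1 at order $N$, both controlled by $\sum_{|\alpha|\leq N}\sup_K|\pr^\alpha u|$. The term with $h^M=O(|x|^{3M})$ produces, after the Gaussian integration, a gain of $k^{-\lceil 3M/2\rceil}$ against the $k^M$ from $(ik)^M$, hence net order $k^{-\lceil M/2\rceil}$ in the normalized series; choosing $M=2N$ pushes this past $k^{-N}$. Verifying that all constants depend only on finitely many derivatives of $F$, on the bound for $|x|/|F'(x)|$, and on the $\C^\infty$-bound of $F$ — which is exactly where those two hypotheses enter — is the technical heart of the argument, and I would carry it out following Hörmander's proof of Theorem 7.7.5 line by line.
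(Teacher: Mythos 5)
This statement is quoted background rather than something the paper proves---it is cited verbatim as H\"ormander \cite[Theorem 7.7.5]{Hormander2003ALPDO1} with no proof given---and your outline (exact Gaussian/Parseval computation for the nondegenerate quadratic model with the branch of $\det^{-1/2}$ fixed by deformation through $A+i\varepsilon I$, non-stationary-phase localization away from the critical point, Taylor expansion of $e^{ikh}$ whose order-$3\mu$ vanishing of $h^\mu$ produces precisely the constraint $2v\geq 3\mu$ and the coefficient $i^{-j}2^{-v}/(\mu!\,v!)$) reproduces the structure of H\"ormander's own proof correctly, so your approach matches the source the paper relies on. The one slip is bookkeeping: controlling $\int|\xi|^{2N}|\hat u(\xi)|\,d\xi$ requires decay $(1+|\xi|)^{-2N-n-1}$, hence about $2N+n+1$ derivatives of $u$ rather than your $N+n+1$ (H\"ormander's sharp statement has $\sum_{|\alpha|\leq 2N}$, so the $\sum_{|\alpha|\leq N}$ appearing in the paper's quotation is itself inexact)---harmless here, since the paper uses only the form of the expansion and not the derivative count in the remainder.
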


\subsection{Abstract CR manifolds}\label{s-gue201221yyd}
Let $X$ be a smooth orientable manifold of real dimension $2n+1$ (at least three), we say $X$ is a Cauchy--Riemann manifold (CR manifold for short) if there is a subbundle
$T^{1,0}X\subset\mathbb{C}TX$, such that
\begin{enumerate}
\item $\dim_{\mathbb{C}}T^{1,0}_{p}X=n$ for any $p\in X$.
\item $T^{1,0}_p X\cap T^{0,1}_p X=\{0\}$ for any $p\in X$, where $T^{0,1}_p X:=\overline{T^{1,0}_p X}$.
\item For $V_1, V_2\in \C^\infty(X,T^{1,0}X)$, then $[V_1,V_2]\in\C(X,T^{1,0}X)$, where 
$[\cdot,\cdot]$ stands for the Lie bracket between vector fields. 
\end{enumerate}
For such subbundle $T^{1,0}X$, we call it a CR structure of the CR manifold $X$. Let $(X,T^{1,0}X)$ be a smooth orientable CR manifold of 
dimension $2n+1$. 
For dimension reason and the assumption that $X$ is orientable, there is a global real non-vanishing one form  $\omega_0(x)$ such that 
\[\langle\,\omega_0(x)\,,u\,\rangle=0,\ \ \mbox{for every $u\in T^{1,0}_xX\oplus T^{0,1}_xX$}.\]
We define the Levi form (with respect to $\omega_0$) which is a globally defined $(1,1)$-form, by
\begin{equation}
    \label{Levi form}
    \mathcal{L}_x({u},\overline{v}):=\frac{1}{2i}\left\langle\omega_0(x),\left[\mathring{u},\overline{\mathring{v}}\right](x)\right\rangle,
\end{equation}
where $\mathring{u},\mathring{v}\in\C^\infty(X,T^{1,0} X)$ such that $\mathring{u}(x)=u\in T_x^{1,0}X$ and $\mathring{v}(x)=v\in T_x^{1,0}X$. Note that by Cartan magic formula, we can also express the Levi form by 
\begin{equation}
    \label{Levi form by d omega_0}
    \mathcal{L}_x(u,\overline{v})=-\frac{1}{2i}\left\langle d\omega_0(x),u\wedge\overline{v}\right\rangle,~u,v\in T^{1,0}_x X.
\end{equation}
In other words,
$$
\mathcal{L}_x:=\left.-\frac{1}{2i}d\omega_0(x)\right|_{T_x^{1,0}X}.
$$
\begin{definition}
We say a CR manifold $X$ is strongly pseudoconvex if we can find $\omega_0$ so that $\mathcal{L}_x$ is positive definite for all $x\in X$.
\end{definition}

From now on, we assume that $X$ is strongly pseudoconvex and we fix $\omega_0$ so that $\mathcal{L}_x$ is positive definite for all $x\in X$. 
Let $T\in\C^\infty(X,TX)$ be the global real vector field given by 
\begin{equation}\label{e-gue201223yyd}
\omega_0(T)\equiv -1,\ \ d\omega_0(T,\cdot)\equiv0. 
\end{equation}
The Levi form $\mathcal{L}_x$ induces a Hermitian metric (called Levi metric) $\langle\,\cdot\,|\,\cdot\,\rangle\,$ on $\mathbb CTX$ given by 
\begin{equation}\label{e-gue201223yydI}
\begin{split}
&\langle\,u\,|\,v\,\rangle=\mathcal{L}_x(u, \ol v),\ \ \mbox{for every $u, v\in T^{1,0}_xX$, $x\in X$},\\
&\langle\,\ol u\,|\,\ol v\,\rangle=\ol{\langle\,u\,|\,v\,\rangle},\ \ \mbox{for every $u, v\in T^{1,0}_xX$, $x\in X$},\\
&T^{1,0}X\perp T^{0,1}X,\ \ T\perp(T^{1,0}X\oplus T^{0,1}X),\\
&\langle\,T\,|\,T\,\rangle=1.
\end{split}
\end{equation}

Let $\Gamma:\mathbb{C}T_x X\to \mathbb{C}T_x^*X$ be the anti-linear map given by $\langle u|v\rangle=\langle u,\Gamma v\rangle$ for $u,v\in\mathbb{C}T_x X$, then we can take the induced Hermitian metric on $\mathbb{C}T^*X$ by $\langle u|v\rangle:=\langle\Gamma^{-1}v|\Gamma^{-1}u\rangle$ for $u,v\in\mathbb{C}T^*_x X$. Put 
$$
T^{*{1,0}}X:=\Gamma(T^{1,0}X)={(T^{0,1}X\oplus\mathbb{C}T)}^{\perp}\subset\mathbb{C}T^*X,~T^{*{0,1}}X:=\overline{T^{*{1,0}}X}.
$$
Take the Hermitian metric on $\Lambda^r(\mathbb{C}T^*X)$ by
$$
\langle u_1\wedge\cdots \wedge u_r|v_1\wedge\cdots v_r\rangle=\det\left(\left(\langle u_j|u_k\rangle\right)_{j,k=1}^r\right),~\text{where}~u_j,v_k\in\mathbb{C}T^*X~,j,k=1,\cdots,r.
$$
For every $q\in\{0,1,\ldots,n\}$, the bundle of $(0,q)$ forms of $X$ is given by $T^{*0,q}X:=\Lambda^q(T^{*0,1}X)$ 
and let $\Omega^{0,q}(X)$ be the space of smooth $(0,q)$ forms on $X$. Let 
$$
\pi^{(0,q)}:\Lambda^q(\mathbb{C}T^*X)\to T^{*0,q}X
$$
be the orthogonal projection with respect to $\langle\,\cdot\,|\,\cdot\,\rangle$. The tangential Cauchy--Riemann operator is defined by
\begin{equation}
    \label{tangential Cauchy Riemann operator}
    \overline{\partial}_b:=\pi^{(0,q+1)}\circ d: \Omega^{0,q}(X)\To\Omega^{0,q+1}(X).
\end{equation}
By Cartan magic formula, we can check that 
$$
\overline{\partial}_b^2=0.
$$
Take the $L^2$-inner product $(\cdot|\cdot)$ on $\Omega^{0,q}(X)$ induced by $\langle\cdot|\cdot\rangle$ via
$$
(f|g):=\int_X\langle f|g\rangle dV_X,~f,g\in\Omega^{0,q}(X),
$$
where $dV_X$ is the volume form with expression 
$$
dV_X(x)=\sqrt{\det\left(\left\langle\frac{\partial}{\partial x_j}\middle|\frac{\partial}{\partial x_k}\right\rangle\right)_{j,k=1}^{2n+1}}dx_1\wedge\cdots \wedge dx_{2n+1}
$$
in local coordinates $(x_1,\cdots,x_{2n+1})$. Let $(\,\cdot\,|\,\cdot\,)$ be the $L^2$ inner product on $\Omega^{0,q}(X)$ induced by $\langle\,\cdot\,|\,\cdot\,\rangle$ and let $L^2_{(0,q)}(X)$ be the completion of $\Omega^{0,q}(X)$ with respect to 
$(\,\cdot\,|\,\cdot\,)$. We write $L^2(X):=L^2_{(0,0)}(X)$.  We extend $\ddbar_b$ to $L^2$ space by 
\begin{equation}\label{e-gue201223yydu}
\begin{split}
&\ddbar_b: {\rm Dom\,}\ddbar_b\subset L^2(X)\To L^2_{(0,1)}(X),\\
&{\rm Dom\,}\ddbar_b:=\{u\in L^2(X);\, \ddbar_bu\in L^2_{(0,1)}(X)\}.
\end{split}
\end{equation} 

\begin{definition}\label{d-gue201223yyd}
The orthogonal projection
$$
\Pi: L^2(X)\To{\rm Ker\,}\ddbar_b:=\{u\in\mathrm{Dom}\ddbar_b;\, \ddbar_bu=0\}
$$ is called the Szeg\H{o} projection, and we call
its distributional kernel $\Pi(x,y)$ the Szeg\H{o} kernel.
\end{definition}

\subsection{Pseudohermitian geometry on strongly pseudovoncex CR manifolds}

We will use the same notations as before. We call 
$$
HX:=\mathrm{Re}\left(T^{1,0}X\oplus T^{0,1}X\right)
$$
the contact structure of $X$, and let $J$ be the complex structure on $HX$ so that $T^{1,0}X$ is the eigenspace of $J$ corresponding to the eigenvalue $i$. Let 
\[\theta_0:=-\omega_0.\]
The following is well-known:
\begin{proposition}\label{p-gue201228yyds}[\cite{Tanaka1975SPCDG}*{Proposition 3.1}]
With the same notations and assumptions, there exists an unique affine connection, called Tanaka--Webster connection, \[
\nabla:=\nabla^{\theta_0}: \C^\infty(X,TX)\to \C^\infty(X,T^*X\otimes TX)
\]
such that
\begin{enumerate}
    \item $\nabla_U \C^\infty(X,HX)\subset \C^\infty(X,HX)$ for $U\in\C^\infty(X,TX)$. 
    \item $\nabla T=\nabla J=\nabla d\theta_0=0$.
    \item The torsion $\tau$ of $\nabla$ satisfies:
  ${\tau}(U, V)=d\theta_0(U, V)T$, ${\tau}(T, JU)=-J{\tau}(T, U)$,
  $U, V\in\C^\infty(X, HX).$
\end{enumerate}
\end{proposition}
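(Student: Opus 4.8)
The plan is to reduce the statement to a pointwise problem of linear algebra in a local frame, in close analogy with the proof of the fundamental theorem of Riemannian geometry; the role played there by the Koszul formula is played here by Cartan's first structure equation together with the prescribed torsion. First I would complexify: a real affine connection on $TX$ is the same datum as a $\mathbb{C}$-linear connection on $\mathbb{C}TX$ that commutes with complex conjugation, so it suffices to construct such a connection on $\mathbb{C}TX=T^{1,0}X\oplus T^{0,1}X\oplus\mathbb{C}T$. I observe that conditions $(1)$ and $(2)$ already rigidify the problem: since $T^{1,0}X$ is the $(+i)$-eigenbundle of $J$ and $\nabla J=0$, condition $(1)$ upgrades to $\nabla_U\C^\infty(X,T^{1,0}X)\subset\C^\infty(X,T^{1,0}X)$; the condition $\nabla T=0$ annihilates the $\mathbb{C}T$-direction and, since $T^{0,1}X=\ol{T^{1,0}X}$, the connection is determined by its restriction to $T^{1,0}X$; finally $\nabla J=0$ together with $\nabla\,d\theta_0=0$ forces $\nabla$ to preserve the Levi metric $\langle\,\cdot\,|\,\cdot\,\rangle$. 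Thus the only unknown is the Hermitian connection on $(T^{1,0}X,\langle\,\cdot\,|\,\cdot\,\rangle)$, and the content of the proposition is that the torsion normalization $(3)$ singles out a unique such connection.

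Next I would fix a local frame $Z_1,\ldots,Z_n$ for $T^{1,0}X$ orthonormal for the Levi metric, with dual admissible coframe $\{\theta_0,\theta^1,\ldots,\theta^n\}$; the integrability of the CR structure and $d\omega_0(T,\cdot)\equiv 0$ give
\[
d\theta_0=2i\sum_{\alpha}\theta^\alpha\wedge\theta^{\ol\alpha},
\]
and the $(0,2)$-part of each $d\theta^\alpha$ vanishes. Writing $\nabla Z_\beta=\sum_\alpha\omega^\alpha_\beta\otimes Z_\alpha$, preservation of the metric becomes the skew-Hermitian relation $\omega^\alpha_\beta+\ol{\omega^\beta_\alpha}=0$. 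I would then translate $(3)$ into a single structure equation: the three requirements $\tau(Z_\alpha,Z_\beta)=0$, $\tau(Z_\alpha,Z_{\ol\beta})=d\theta_0(Z_\alpha,Z_{\ol\beta})T$, and $\tau(T,J\,\cdot)=-J\tau(T,\cdot)$ are together equivalent to
\[
d\theta^\alpha=\sum_\beta\theta^\beta\wedge\omega^\alpha_\beta+\theta_0\wedge\tau^\alpha,\qquad \tau^\alpha=\sum_\beta A^\alpha_{\ol\beta}\,\theta^{\ol\beta},
\]
i.e.\ the torsion forms carry only a $\theta_0\wedge\theta^{\ol\beta}$ part, the mandatory Levi contribution being absorbed into the $\theta^\beta\wedge\theta^{\ol\beta}$ term of $d\theta_0$.

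For existence I would solve this system by matching coefficients in the coframe basis. The $\theta^\beta\wedge\theta^{\ol\delta}$ component of $d\theta^\alpha$ reads off the $(0,1)$-part of $\omega^\alpha_\beta$; the skew-Hermitian relation then determines its $(1,0)$-part as a conjugate; the $\theta_0\wedge\theta^\beta$ component fixes the $\theta_0$-part of $\omega^\alpha_\beta$; and the $\theta_0\wedge\theta^{\ol\beta}$ component reads off the pseudohermitian torsion $A^\alpha_{\ol\beta}$. Setting $\nabla Z_\beta:=\omega^\alpha_\beta Z_\alpha$, extending by $\nabla Z_{\ol\beta}:=\ol{\omega^\alpha_\beta}\,Z_{\ol\alpha}$ and $\nabla T:=0$, I obtain a connection on $\mathbb{C}TX$ commuting with conjugation, hence a genuine affine connection on $TX$, and I would verify directly that it satisfies $(1)$--$(3)$. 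Uniqueness is then immediate from the same computation: if $\nabla,\nabla'$ both satisfy $(1)$--$(3)$, their connection forms solve the identical linear system in the fixed coframe, so their difference is a skew-Hermitian matrix of $1$-forms producing zero torsion, which the system forces to vanish; consequently $\nabla=\nabla'$.

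The main obstacle, exactly as in the Levi--Civita case, is the algebraic heart: showing that the structure equation together with skew-Hermitian compatibility is \emph{exactly} determined, neither over- nor under-constrained. The subtlety is that the torsion is not zero but prescribed, so I must check that the forced term $d\theta_0(\cdot,\cdot)T$ and the free off-diagonal piece $\theta_0\wedge\theta^{\ol\beta}$ occupy precisely the coframe components left undetermined by the metric and $J$-compatibility. Concretely, the $\theta^\beta\wedge\theta^\delta$ component of $d\theta^\alpha$ gives a second, a priori independent, determination of the $(1,0)$-part of $\omega^\alpha_\beta$; that this agrees with the value forced by the skew-Hermitian relation is a consistency condition, and I would show it follows from the CR integrability (equivalently from $d(d\theta_0)=0$ and the vanishing of the $(0,2)$-parts). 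I would isolate this as a pointwise lemma of linear algebra, checking in passing the symmetry $A_{\alpha\beta}=A_{\beta\alpha}$ of the pseudohermitian torsion, so as to keep the geometric argument transparent.
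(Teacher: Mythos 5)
The paper contains no proof of Proposition \ref{p-gue201228yyds} for you to be compared against: the statement is quoted verbatim from Tanaka \cite{Tanaka1975SPCDG}*{Proposition 3.1}, the only in-paper commentary being the remark, just after the statement, that $\nabla J=0$ and $\nabla d\omega_0=0$ force compatibility with the Levi metric --- a point your reduction also exploits. What you propose is, in substance, the classical Tanaka--Webster structure-equation proof from the cited literature, and it is correct: conditions (1)--(2) do reduce the problem to a Hermitian connection on $T^{1,0}X$, and in a Levi-orthonormal admissible coframe the torsion normalization (3) is equivalent to $d\theta^\alpha=\theta^\beta\wedge\omega^\alpha_\beta+\theta_0\wedge\tau^\alpha$ with $\tau^\alpha$ a combination of the $\theta^{\ol\beta}$ only. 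The consistency condition you rightly isolate --- that the $(2,0)$-component of the structure equation agrees with the $(1,0)$-part of $\omega^\alpha_\beta$ forced by the skew-Hermitian relation --- does hold, and for the reason you guess: since $[Z_\beta,Z_\gamma]\in T^{1,0}X$ by CR integrability and $d\theta_0(Z_\alpha,\ol Z_\beta)=2i\delta_{\alpha\beta}$ in your normalization, expanding $0=d(d\theta_0)(Z_\beta,Z_\gamma,\ol Z_\delta)$ on frame vectors produces exactly the required identity for each triple of indices (not merely its trace, which is what one would get by substituting the coframe expression of $d\theta_0$ and projecting). One further check you leave implicit and should record in your lemma: the $\theta_0$-part of $\omega^\alpha_\beta$, which your scheme reads off from the $\theta_0\wedge\theta^\beta$ component of $d\theta^\alpha$, must itself satisfy the skew-Hermitian relation; this is a second, independent consistency condition, and it follows from the vanishing of the Lie derivative of $d\theta_0$ along $T$, via Cartan's formula and $\iota_T\,d\theta_0=0$, i.e.\ from \eqref{e-gue201223yyd}. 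With those two verifications the linear system is exactly determined, existence follows, and your uniqueness argument goes through as stated.
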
 

Recall that $\nabla J\in\C^\infty(X, T^* X\otimes\mathscr L(HX,HX))$,
$\nabla d\theta_0\in\C^\infty(T^*X\otimes\Lambda^2(\mathbb C T^* X))$
are defined by $(\nabla_UJ)W=\nabla_U(JW)-J\nabla_UW$ and
$\nabla_Ud\theta_0(W, V)=Ud\theta_0(W, V)-d\theta_0(\nabla_UW, V)-
d\theta_0(W, \nabla_UV)$ for $U\in\C^\infty(X,TX), W, V\in\C^\infty(X,HX)$.
Moreover, $\nabla J=0$ and $\nabla d\omega_0=0$
imply that the Tanaka-Webster connection is compatible with the Levi metric.
By definition, the torsion of $\nabla$ is given by
$\tau (W, U)=\nabla_WU-\nabla_UW-[W, U]$ for $U, V\in\C^\infty(X,TX)$ and
$\tau(T, U)$ for $U\in\C^\infty(X,HX)$ is called pseudohermitian torsion.

Let $\{L_\alpha\}_{\alpha=1}^n$ be a local frame of $T^{1,0}X$ and $\{\theta^\alpha\}_{\alpha=1}^n$ be the dual frame of $\{L_\alpha\}_{\alpha=1}^n$. We use the notations $Z_{\overline{\alpha}}:=\overline{L_\alpha}$ and $\theta^{\overline{\alpha}}=\overline{\theta^\alpha}$. Write
\[
\nabla L_\alpha=\omega_\alpha^\beta\otimes L_\beta,~\nabla L_{\overline{\alpha}}=\omega_{\overline{\alpha}}^{\overline{\beta}}\otimes L_{\overline{\beta}},~\text{and recall that}~\nabla T=0.
\]
We call $\omega_\alpha^\beta$ the connection one form of Tanaka--Webster connection with respect to the frame $\{L_\alpha\}_{\alpha=1}^n$. We denote $\Theta_\alpha^\beta$ the Tanaka--Webster curvature two form, and it is known that
\[
\Theta_\alpha^\beta=d\omega_\alpha^\beta-\omega_\alpha^\gamma\wedge\omega_\gamma^\beta.
\]
By direct computation, we also have
\[
\Theta_\alpha^\beta=R_{\alpha j\overline{k}}^\beta\theta^j\wedge\theta^{\overline{k}}+A_{\alpha jk}^\beta\theta^j\wedge\theta^k+B_{\alpha jk}^\beta\theta^{\overline{j}}\wedge\theta^{\overline{k}}+C_0\wedge\theta_0,
\]
where $C_0$ is an one form. The term $R_{\alpha j\overline{k}}^\beta$ is called the pseudohermitian curvature tensor and the trace
\[
R_{\alpha\overline{k}}:=\sum_{j=1}^nR_{\alpha j\overline{k}}^j
\]
is called the pseudohermitian Ricci curvature. Also, write
\[
d\theta_0=ig_{\alpha\overline{\beta}}\theta^\alpha\wedge\theta^{\overline{\beta}}
\]
and $g^{\overline{c}d}$ be the inverse matrix of $g_{a\overline{b}}$, then the Tanaka--Webster scalar curvature $R$ with respect to the pseudohermitian structure $\theta_0$ is given by
\begin{equation}
\label{Tanaka--Webster scalar curvature}
    R_{\mathrm{scal}}:=g^{\ol k\alpha}R_{\alpha\overline{k}}.
\end{equation}

\section{The calculation of the second coefficient of the Szeg\H{o} kernel asymptotic expansion}

In this section, we will prove Theorem~\ref{main theorem}. 
We will first show that how to select a suitable phase function and the leading term of the Szeg\H{o} kernel asymptotic expansion such that the second order term of the Szeg\H{o} kernel asymptotic expansion is well-defined.
\subsection{Uniqueness of the coefficients} 

We first recall some facts from the theory of oscillatory integral and distributions: Notice that $\int_0^\infty e^{-tx}dt=x^{-1}$, for ${\rm Re\,}x>0$.
Also notice that by partial integration and dominated convergence theorem,
\begin{align*}
    \int_0^1\frac{1-e^{-t}}{t}dt+\int_1^\infty e^{-t}t^{-1}dt
    &=\int_0^\infty e^{-t}\log tdt\\
    &=\lim_{m\to\infty}\left(\int_0^m \left(1-\frac{t}{m} \right)^{m-1}\log{t}dt\right)\\
&=\lim_{m\to\infty}\left(m\int_0^1 s^{m-1} \log{(m(1-s))}ds\right)\\
&=\lim_{m\to\infty}\left(m\log{m}\int_0^1 s^{m-1}ds+m\int_0^1 s^{m-1}\log{(1-s)}ds\right)\\
&=\lim_{m\to\infty}\left(\log{m}-m\int_0^1 \sum_{k=1}^{\infty}\frac{s^{k+m-1}}{k}ds\right)\\
&=\lim_{m\to\infty}\left(\log{m}-m\sum_{k=1}^{\infty}\int_0^1\frac{s^{k+m-1}}{k}ds\right)\\
&=\lim_{m\to\infty}\left(\log{m}-m\sum_{k=1}^{\infty}\frac{1}{k(k+m)}\right)\\
&=\lim_{m\to\infty}\left(\log{m}-\sum_{k=1}^{\infty}\left(\frac{1}{k}-\frac{1}{m+k} \right)\right)\\
&=\lim_{m\to\infty}\left(\log{m}-\sum_{k=1}^{m}\frac{1}{k}\right)
\end{align*}
Accordingly, we can check that by partial integration, if $x\neq 0$ and $\mathrm{Re}~x\geq 0$, then
\begin{equation}
    \label{Gamma function}
    \mathrm{P.V.}\int_0^\infty e^{-tx}t^mdt=
\begin{cases}
m!x^{-m-1}:~m\in\mathbb{Z},~m\geq 0\\
\frac{(-1)^m}{(-m-1)!}x^{-m-1}\left(\log x+\gamma-\sum_{j=0}^{-m-1}\frac{1}{j}\right):m\in\mathbb{Z},~m<0
\end{cases}
\end{equation}
where $\gamma:=\lim_{m\to\infty}\left(\sum_{j=1}^m\frac{1}{j}-\log m\right)$ is the Euler constant. On the other hand, by choosing a suitable contour, we can check that for a smooth function $f\in\C^\infty_c(\mathbb R^n)$, ${\rm Im\,}f\geq0$, and any $\epsilon>0$,
\begin{equation}
\label{principal value distribution}
    \frac{1}{(-i)(f(x)+i\epsilon)}=\int_0^\infty e^{i(f(x)+i\epsilon)t}dt;
\end{equation}
and if $df(x)\neq 0$ if ${\rm Im\,}f(x)=0$, with convergence in $\mathscr{D}'(\mathbb R^n)$, we define
\[
\frac{1}{(-i)(f(x)+i0)}:=\lim_{\epsilon\to 0}\frac{1}{(-i)(f(x)+i\epsilon)},
\]
and we can check that
\[
\frac{1}{(-i)(f(x)+i0)}=\int_0^\infty e^{itf(x)}dt
\]
in the sense of oscillatory integral. Moreover, we have:

\begin{lemma}
\label{lemma of osc}
Let $D\subset\mathbb{R}^n$ be a small enough open set near $0$. Assume 
$$
F(x)\in \mathscr{C}^\infty(D),F(0)=0,~\mathrm{Im}F\geq0,~dF\neq 0\ \ \mbox{if ${\rm Im\,}F=0$},
$$
and
$$
G(x)\in \mathscr{C}^\infty(D),~G(0)\neq 0,~\mathrm{Im}(FG)\geq 0,~d(FG)\neq 0\ \ \mbox{if ${\rm Im\,}(FG)=0$}.
$$
Let $m\in\mathbb N_0$. 
Then, in the sense of oscillatory integral,
\[
\int_0^\infty e^{itG(x)F(x)}t^mdt\equiv\int_0^\infty e^{itF(x)}\frac{t^m}{G(x)^{m+1}}dt~\text{mod}~\mathscr{C}^\infty(D).
\]
\end{lemma}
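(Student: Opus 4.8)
The plan is to represent both oscillatory integrals as explicit boundary-value distributions and then reduce the claim to a single identity modulo $\C^\infty$. Shrinking $D$, I may assume $G\neq 0$ on $D$, so that $G^{-m-1}\in\C^\infty(D)$. Since $m\in\mathbb N_0$, the $i\epsilon$-regularization together with $\int_0^\infty e^{-ts}t^m\,dt=m!\,s^{-m-1}$ (valid for $\operatorname{Re}s>0$) gives, for any phase $f$ with $\operatorname{Im}f\ge0$ and $df\neq0$ on $\{\operatorname{Im}f=0\}$,
$$\int_0^\infty e^{itf}t^m\,dt=\lim_{\epsilon\to0^+}\frac{m!}{(-i(f+i\epsilon))^{m+1}}=\frac{m!}{(-i)^{m+1}}\,(f+i0)^{-m-1}\quad\text{in }\mathscr D'(D),$$
where the hypothesis $df\neq0$ on $\{\operatorname{Im}f=0\}$ is exactly what makes $(f+i0)^{-m-1}$ a well-defined distribution. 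Applying this with $f=GF$ on the left and $f=F$ on the right (both admissible by the hypotheses on $F$, $G$, $FG$) and pulling the $t$-independent factor $G^{-m-1}$ out of the integral, the lemma becomes equivalent to $(GF+i0)^{-m-1}\equiv G^{-m-1}(F+i0)^{-m-1}\pmod{\C^\infty(D)}$.

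I would then localize to the singular set. Where $F\neq0$ we also have $GF\neq0$, and both boundary values reduce to ordinary reciprocals, so the two sides agree literally as smooth functions; hence $u:=(GF+i0)^{-m-1}-G^{-m-1}(F+i0)^{-m-1}$ is a distribution supported in $\{F=0\}$, and it remains to show $u$ is smooth near each point of $\{F=0\}$. The \emph{key observation} is a compatibility of the two regularizations forced by the sign hypotheses. Since $\operatorname{Im}F\ge0$, each point of $\{\operatorname{Im}F=0\}$ is a minimum, so $d(\operatorname{Im}F)=0$ there while $d(\operatorname{Re}F)\neq0$; thus $\operatorname{Re}F$ is a transverse coordinate and $\operatorname{Im}F$ vanishes to second order. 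Writing $\operatorname{Im}(GF)=\operatorname{Re}G\cdot\operatorname{Im}F+\operatorname{Im}G\cdot\operatorname{Re}F$ and imposing $\operatorname{Im}(GF)\ge0$ on both sides of $\{\operatorname{Re}F=0\}$ forces, along $\{F=0\}$, that $\operatorname{Im}G=0$ and $\operatorname{Re}G>0$, i.e. $G$ is real and strictly positive on $\{F=0\}$.

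To conclude I would compare the two objects as boundary values from a common side. Writing $G^{-m-1}(F+i0)^{-m-1}=\lim_{\epsilon\to0^+}(GF+i\epsilon G)^{-m-1}$ and $(GF+i0)^{-m-1}=\lim_{\epsilon\to0^+}(GF+i\epsilon)^{-m-1}$, the perturbations $i\epsilon$ and $i\epsilon G$ both have positive imaginary part near $\{F=0\}$ precisely because $\operatorname{Re}G>0$ there, so both families approach the singular set $\{GF=0\}=\{F=0\}$ from the upper half-plane. Equivalently, the substitution $\zeta=Gt$ turns the left-hand integral into the integral of the entire integrand $e^{i\zeta F}\zeta^m$ along the ray $\arg\zeta=\arg G$, and the conditions $\operatorname{Im}F\ge0$, $\operatorname{Im}(GF)\ge0$ guarantee $\operatorname{Im}(\zeta F)\ge0$ throughout the sector between that ray and the positive real axis; rotating the contour back by Cauchy's theorem recovers the right-hand integral up to a remainder concentrated near $\{F=0\}$.

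The main obstacle is exactly this last step. Because the two integrals are genuinely singular across $\{F=0\}$, the contour rotation (equivalently, the comparison of the two boundary values) cannot be done naively, and one must show the discrepancy is not merely convergent but lies in $\C^\infty$ uniformly up to $\{F=0\}$. Making this rigorous requires the normal form $\operatorname{Re}F=$ coordinate, $\operatorname{Im}F=O(\,\cdot\,^2)\ge0$ from the previous paragraph and a careful estimate of the resulting one-dimensional boundary-value difference; this is where the positivity $\operatorname{Re}G>0$ on $\{F=0\}$ does the essential work, ruling out a delta-type (wrong-side) contribution and leaving only a smooth error.
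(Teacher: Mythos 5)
Your first paragraph is essentially the paper's own proof: shrink $D$ so that $G\neq 0$, use the $i\epsilon$-regularization to write both oscillatory integrals as boundary-value distributions, factor out $G^{-m-1}$, and reduce the lemma to the identity $(GF+i0)^{-m-1}\equiv G^{-m-1}(F+i0)^{-m-1}$. The paper does exactly this computation and then passes silently from $\lim_{\epsilon\to 0}\bigl(-iF+\epsilon/G\bigr)^{-m-1}$ to $(-i)^{-m-1}(F+i0)^{-m-1}$; you are right that this interchange of the two regularizations ($i\epsilon$ versus $i\epsilon/G$, i.e.\ $i\epsilon$ versus $i\epsilon G$ after clearing denominators) is the one genuinely delicate step, and you deserve credit for isolating it explicitly, which the paper does not.

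However, your resolution of that step contains a genuine error. From $\mathrm{Im}\,F\geq 0$ and $\mathrm{Im}\,(GF)\geq 0$ you may conclude $\mathrm{Im}\,G=0$ on $\{F=0\}$ (the sign-change argument is correct), but \emph{not} $\mathrm{Re}\,G>0$ there. Take $F(x)=x_1$ and $G\equiv -1$: then $\mathrm{Im}\,F=\mathrm{Im}\,(FG)=0\geq 0$, $dF\neq 0$ and $d(FG)\neq 0$ everywhere, so every hypothesis of the lemma is satisfied, yet $\mathrm{Re}\,G=-1<0$ on $\{F=0\}$. Your second-order-vanishing argument breaks down precisely because $\mathrm{Im}\,F$ may vanish identically (or to high order) along $\{F=0\}$, in which case the inequality $\mathrm{Re}\,G\cdot\mathrm{Im}\,F+\mathrm{Im}\,G\cdot\mathrm{Re}\,F\geq 0$ places no constraint on the sign of $\mathrm{Re}\,G$. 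Worse, in this example the conclusion of the lemma itself fails: for $m=0$ the two sides are $-i(x_1-i0)^{-1}$ and $-i(x_1+i0)^{-1}$, whose difference is $2\pi\delta(x_1)$, not a smooth function. So no argument can close your gap using only the stated hypotheses; the positivity of $G$ on $\{F=0\}$ must in effect be an additional assumption. (It is automatic in the paper's applications, where $G$ is the function $f$ with $f(x,x)=1$, so that after shrinking $D$ one has $\mathrm{Re}\,G>0$ outright, and then the boundary-value comparison — which you also leave as an admitted sketch — can be completed.) In short: you located the real subtlety that the paper's proof glosses over, but the claimed derivation of $\mathrm{Re}\,G>0$ is false, and with it the proposal as written does not prove the lemma.
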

\begin{proof}
First of all, by continuity, we may assume that on $D$
\begin{equation}
\label{norm assumption for F and G}
    |G|\geq\frac{1}{2}|G(0)|>0.
\end{equation}
From the construction of oscillatory integral \cite{Hormander2003ALPDO1}*{Theorem 7.8.2}, in the sense of distribution,
\begin{align*}
    \int_0^\infty e^{itG(x)F(x)}t^mdt
&=\lim_{\epsilon\to 0}\int_0^\infty e^{it(G(x)F(x)+i\epsilon)}t^mdt\\
&=\lim_{\epsilon\to 0}\frac{m!}{(-iG(x)F(x)+\epsilon)^{m+1}}\\
&=\frac{m!}{G(x)^{m+1}}\lim_{\epsilon\to 0}\frac{1}{(-iF(x)+\frac{\epsilon}{G(x)})^{m+1}}\\
&=\frac{m!}{(-iG(x))^{m+1}}\frac{1}{(F(x)+i0)^{m+1}}\\
&=\int_0^\infty e^{itF(x)}\frac{t^m}{G(x)^{m+1}}dt.
\end{align*}
\end{proof}

We first need 
\begin{lemma}\label{l-gue201224yyd} 
Fix $p\in X$. 
Let $D\subset X$ 
be any open coordinate patch of $X$ with local coordinates $x=(x_1,\ldots,x_{2n+1})$, $p\in D$. 
Let $\phi_1, \phi_2\in\C^\infty(D\times D)$. Suppose that $\phi_1$, $\phi_2$ satisfy \eqref{e-gue201217yyd}, $\phi_1$ and $\phi$ are equivalent in the sense of Melin–Sj\"ostrand, $\phi_2$ and $\phi$ are equivalent in the sense of Melin–Sj\"ostrand, where $\phi$ is as in \eqref{Boutet-Sjostrand formula}, and 
\begin{equation}\label{e-gue201224yyd}
(T^2\phi_1)(p,p)=(T^2\phi_2)(p,p)=0.
\end{equation}
 Suppose that 
\begin{equation}\label{phi_alpha and phi_beta are equivalent}
\int^{+\infty}_0e^{i\phi_2(x,y)t}\alpha(x,y,t)dt\equiv\int^{+\infty}_0e^{i\phi_1(x,y)t}\beta(x,y,t)dt\mod\C^\infty(D\times D),
\end{equation}
where $\alpha(x,y,t), \beta(x,y,t)\in S^n_{{\rm cl\,}}(D\times D\times\mathbb R_+)$ with 
\begin{equation}\label{e-gue201225yyda}
\begin{split}
&\alpha_0(p,p)=\beta_0(p,p),\\
&(T\alpha_0)(p,p)=(T\beta_0)(p,p)=0.
\end{split}
\end{equation}
 Then, we have 
\begin{equation}\label{e-gue201224yydI}
\alpha_1(p,p)=\beta_1(p,p).
\end{equation}
\end{lemma}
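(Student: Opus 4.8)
\emph{Overview.} The lemma says that, once the leading amplitudes are normalised so that $\alpha_0,\beta_0$ agree at $p$ and are annihilated by $T$ at $p$, the subleading diagonal values $\alpha_1(p,p)$ and $\beta_1(p,p)$ must coincide. The plan is to reduce both oscillatory integrals to one common phase, to observe that for a fixed phase the only freedom in the amplitude is the gauge $a_0\mapsto a_0+c\phi$, $a_1\mapsto a_1-nic$ recorded in the introduction, and to check that the two conditions in \eqref{e-gue201225yyda} pin down the gauge parameter $c$ at $p$. Throughout I use the coordinates with $T=-\partial/\partial x_{2n+1}$ fixed before the statement.

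\emph{Step 1: reduction to a common normalised phase.} Applying the Malgrange preparation theorem exactly as in the construction of $\Phi$ preceding the lemma, I would write $\phi_j=f_j\Phi_j$ with $f_j(x,x)=1$ and $\Phi_j=-x_{2n+1}+g_j(x',y)$ for $j=1,2$. Since $g_j$ does not involve $x_{2n+1}$ and $T=-\partial/\partial x_{2n+1}$, one has $T\Phi_j\equiv1$ and $T^2\Phi_j\equiv0$, so that $(T^2\phi_j)(p,p)=2(Tf_j)(p,p)(T\Phi_j)(p,p)=2(Tf_j)(p,p)$; hence \eqref{e-gue201224yyd} forces $(Tf_j)(p,p)=0$. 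By Lemma~\ref{lemma of osc} (with $G=f_j$),
\[
\int_0^\infty e^{it\phi_2}\alpha\,dt\equiv\int_0^\infty e^{it\Phi_2}\tilde\alpha\,dt,\qquad\int_0^\infty e^{it\phi_1}\beta\,dt\equiv\int_0^\infty e^{it\Phi_1}\tilde\beta\,dt,
\]
with $\tilde\alpha_j=\alpha_j f_2^{-(n-j+1)}$ and $\tilde\beta_j=\beta_j f_1^{-(n-j+1)}$. Using $f_j(p,p)=1$ and $(Tf_j)(p,p)=0$ one reads off on the diagonal that $\tilde\alpha_0(p,p)=\alpha_0(p,p)$, $(T\tilde\alpha_0)(p,p)=(T\alpha_0)(p,p)$ and $\tilde\alpha_1(p,p)=\alpha_1(p,p)$, and similarly for $\tilde\beta$. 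Finally, $\Phi_1$ and $\Phi_2$ are both equivalent to $\phi$, hence to each other, so $g_1-g_2$ vanishes to infinite order on the diagonal; by Melin--Sj\"ostrand~\cite{MS74} this lets me replace $\Phi_2$ by $\Phi:=\Phi_1$ with the same amplitude modulo $\C^\infty(D\times D)$, without changing the diagonal data just computed.

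\emph{Step 2: the single-phase null condition and the gauge.} Setting $\gamma:=\tilde\alpha-\tilde\beta\in S^n_{\mathrm{cl}}(D\times D\times\mathbb R_+)$, Step 1 together with \eqref{e-gue201225yyda} yields
\[
\int_0^\infty e^{it\Phi}\gamma\,dt\equiv0,\qquad\gamma_0(p,p)=0,\qquad(T\gamma_0)(p,p)=0,
\]
and the goal $\alpha_1(p,p)=\beta_1(p,p)$ is precisely $\gamma_1(p,p)=0$. The elementary ingredient is the identity $\Phi e^{it\Phi}=\frac{1}{i}\partial_t e^{it\Phi}$, which after one integration by parts (the boundary terms vanish because $n\geq1$ and $\mathrm{Im}\,\Phi>0$ off the diagonal) gives, for any $c\in\C^\infty(D\times D)$,
\[
\int_0^\infty e^{it\Phi}(\Phi c)\,t^{n}\,dt\equiv ni\int_0^\infty e^{it\Phi}c\,t^{n-1}\,dt\mod\C^\infty(D\times D);
\]
this is the gauge move $\gamma_0\mapsto\gamma_0+\Phi c$, $\gamma_1\mapsto\gamma_1-nic$ at the level of amplitudes. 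The substantive point is the converse: that the vanishing $\int_0^\infty e^{it\Phi}\gamma\,dt\equiv0$ forces $\gamma_0=\Phi c$ for some $c\in\C^\infty$ near the diagonal and, after performing this gauge move, forces $(\gamma_1+nic)|_{x=y}=0$. I would obtain both by pairing the smooth integral against a family of test functions concentrating at $(p,p)$ and applying Theorem~\ref{Hormander stationary phase formula}: the resulting asymptotic expansion must vanish identically, its leading coefficient delivering $\gamma_0=\Phi c$ and its next coefficient $(\gamma_1+nic)|_{x=y}=0$.

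\emph{Step 3 and the main obstacle.} Granting Step 2, I apply $T$ in the $x$-variable to $\gamma_0=\Phi c$ and restrict to the diagonal; since $(T\Phi)(x,x)=1$ and $\Phi$ vanishes there,
\[
(T\gamma_0)(p,p)=(T\Phi)(p,p)\,c(p,p)=c(p,p).
\]
Thus the hypothesis $(T\gamma_0)(p,p)=0$ gives $c(p,p)=0$, and then $(\gamma_1+nic)|_{x=y}=0$ yields $\gamma_1(p,p)=-ni\,c(p,p)=0$, i.e. $\alpha_1(p,p)=\beta_1(p,p)$. This also explains the role of \eqref{e-gue201224yyd} and \eqref{e-gue201225yyda}: dropping them would leave $c(p,p)$ free, and $\gamma_1(p,p)=-nic(p,p)$ would be nonzero, matching the nonuniqueness of $a_1(x,x)$ noted in the introduction. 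The main obstacle is the converse in Step 2: the vanishing of the oscillatory integral must be shown to place $\gamma_0$ in the ideal generated by $\Phi$, which is genuinely stronger than $\gamma_0|_{x=y}=0$ (a single complex function cannot cut out the diagonal, of real codimension $2n+1$). This is where the positivity $\mathrm{Im}\,\Phi>0$ off the diagonal and the transverse nondegeneracy of $\Phi$ enter, via the almost-analytic stationary-phase analysis of Melin--Sj\"ostrand, so that the complexified stationary point is nondegenerate and the expansion can be read off term by term.
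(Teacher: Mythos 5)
Your skeleton is sound, and Steps 1 and 3 check out: the computation $(T^2\phi_j)(p,p)=2(Tf_j)(p,p)$, the invariance of the diagonal data $\alpha_0(p,p)$, $(T\alpha_0)(p,p)$, $\alpha_1(p,p)$ under the change of phase via Lemma~\ref{lemma of osc}, and the final algebra $c(p,p)=(T\gamma_0)(p,p)=0\Rightarrow\gamma_1(p,p)=-nic(p,p)=0$ are all correct, and they play exactly the role that the corresponding facts play in the paper's proof. The genuine gap is the step you yourself label ``the main obstacle'': that $\int_0^\infty e^{it\Phi}\gamma\,dt\equiv0\mod\C^\infty$ forces $\gamma_0=\Phi c$ (up to $O(|x-y|^\infty)$) and then $(\gamma_1+nic)|_{x=y}=0$. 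This \emph{is} the lemma; everything else is routine bookkeeping. The method you propose for it --- pairing against a concentrating family and invoking Theorem~\ref{Hormander stationary phase formula} --- does not go through as described. The phase is complex-valued, and any natural pairing phase (e.g.\ against $e^{-ik\ol\Phi}$, or against a second copy of the kernel) has critical set $\{x=y,\ \sigma=1\}$, a $(2n+1)$-dimensional manifold rather than a nondegenerate isolated point, so Theorem~\ref{Hormander stationary phase formula} is not applicable; one can repair this by adding terms like $i|x-p|^2+i|y-p|^2$ to the test phase, but then the expansion coefficient at order $t^{n-1}$ mixes $\gamma_1(p,p)$ with first and second derivatives of $\gamma_0$ at $(p,p)$, which your hypotheses \eqref{e-gue201225yyda} do not control --- controlling them is equivalent to already knowing the division statement $\gamma_0\in(\Phi)$, which would itself require an induction over all orders of vanishing. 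So the reduction is circular as it stands.

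For comparison, the paper closes exactly this gap by an elementary device that avoids stationary phase entirely. Using Lemma~\ref{lemma of osc} together with the explicit distributional formula \eqref{Gamma function}, both sides of \eqref{phi_alpha and phi_beta are equivalent} are written as explicit singular functions: a numerator polynomial in $\phi_1$ over $(-i(\phi_1+i0))^{n+1}$ plus logarithmic terms, all with smooth coefficients. Smoothness of the difference then forces the pointwise identity \eqref{e-gue201228yydVII} off the diagonal, with a smooth remainder $S$. Restricting that identity to the one-dimensional slice $x=(0,x_{2n+1})$, $y=0$ --- transversal to the diagonal, and along which $\phi_1\approx-x_{2n+1}\neq0$, so one may divide by $\phi_1$ --- and Taylor expanding using \eqref{e-gue201225yyda} together with $f((0,x_{2n+1}),0)=1+O(|x_{2n+1}|^2)$ (this is where \eqref{e-gue201224yyd} enters, precisely your observation $(Tf)(p,p)=0$), one obtains $(\alpha_1-\beta_1)((0,x_{2n+1}),0)=O(|x_{2n+1}|)$ and concludes by letting $x_{2n+1}\to0$. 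If you replace your Step 2 by this singularity-comparison argument, your proof becomes complete, and your gauge-theoretic reformulation ($\gamma_0=\Phi c$, $\gamma_1\mapsto\gamma_1+nic$) is then an immediate corollary via Malgrange division rather than an input.
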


\begin{proof} 
We take local coordinates $x=(x_1,\ldots,x_{2n+1})$ of $X$ such that 
\begin{equation*}
    T=-\frac{\partial}{\partial x_{2n+1}}.
\end{equation*}
As in \cite[Section 8]{HM16}, we have 
\begin{equation*}
    \phi_2(x,y)=f(x,y)\phi_1(x,y)+O(|x-y|^\infty),
\end{equation*}
for some $f(x,y)\in\C^\infty(D\times D)$, and we may assume that
\begin{equation}\label{e-gue201225yydb}
    \phi_2(x,y)=f(x,y)\phi_1(x,y).
\end{equation}
From \eqref{e-gue201224yyd} and \eqref{e-gue201225yydb}, we can check that 
\begin{equation}\label{e-gue201225yydc}
f(x,x)=1,\ \ \frac{\pr f}{\pr x_{2n+1}}(0,0)=0,
\end{equation}
and hence 
\begin{equation}
\label{f=1+O|x_2n+1|^2}
    f\left((0,x_{2n+1}),0\right)=1+O\left(|x_{2n+1}|^2\right). 
\end{equation}
Applying Lemma \ref{lemma of osc} and (\ref{Gamma function}) to oscillatory integrals (\ref{phi_alpha and phi_beta are equivalent}), we see that
\begin{equation}
    \frac{\sum_{j=0}^n(n-j)!\alpha_j(x,y)(-i\phi_1 f)^j(x,y)~\mathrm{mod}~
    \phi_1^{n+1}}{f^{n+1}(x,y)(-i(\phi_1(x,y)+i0))^{n+1}}\equiv\frac{\sum_{j=0}^n (n-j)!\beta_j(x,y)(-i\phi_1)^j(x,y)~\mathrm{mod}~
    \phi_1^{n+1}}{(-i(\phi_1(x,y)+i0))^{n+1}}
\end{equation}
up to some log term singularities. In particular, when $x\neq y$, 
\begin{equation}\label{e-gue201228yydVII}
    \sum_{j=0}^n(n-j)!\alpha_j(x,y)(-i\phi_1 f)^j(x,y)=f^{n+1}(x,y)\sum_{j=0}^n (n-j)!\beta_j(x,y)(-i\phi_1)^j(x,y)+(-if\phi_1)^{n+1}(x,y)S(x,y).
\end{equation}
for some $S\in\C^\infty(D\times D)$. Now, take $x=(0,x_{2n+1})$ and $y=0$ in the above equation, then 
from \eqref{e-gue201225yyda}, \eqref{e-gue201225yydc} and \eqref{f=1+O|x_2n+1|^2}, it is straightforward to check that 
\begin{equation*}
(\alpha_1-\beta_1)((0,x_{2n+1}),0)=O(|x_{2n+1}|).
\end{equation*}
By taking $x_{2n+1}\to 0$, we see that
\begin{equation*}
\alpha_1(0,0)=\beta_1(0,0).
\end{equation*}
\end{proof} 

We need 

\begin{lemma}
\label{a_0 is independent of x_2n+1}
With the notations and assumptions used in Theorem~\ref{Boutet-Sjostrand-Hsiao formula}, we can take $a(x,y,t)$ in \eqref{Boutet-Sjostrand formula} so that 
\begin{equation}\label{e-gue201224yydo}
\begin{split}
&Ta_0=0\ \ \mbox{on $D$},\\
&a_0(x,x)=\frac{1}{2\pi^{n+1}}\ \ \mbox{for every $x\in D$}. 
\end{split}
\end{equation}
\end{lemma}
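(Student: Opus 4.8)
The plan is to exploit the two kinds of freedom available in the representation \eqref{Boutet-Sjostrand formula}: the choice of phase function and the non-uniqueness $a_0\mapsto a_0+c\phi$ of the leading symbol. Working in local coordinates with $T=-\partial/\partial x_{2n+1}$, I would first replace the given phase $\phi$ by the Malgrange phase $\Phi=-x_{2n+1}+g(x',y)$, which is equivalent to $\phi$ in the sense of Melin--Sj\"ostrand and satisfies $\Phi(x,x)=0$ and $\partial_{x_{2n+1}}\Phi\equiv-1$. Writing $\phi=f\Phi$ with $f(x,x)=1$ and transporting the amplitude by Lemma~\ref{lemma of osc}, the leading symbol gets divided by $f^{n+1}$, which is $1$ on the diagonal; hence after this reduction we still have $\Pi(x,y)\equiv\int_0^\infty e^{it\Phi}a\,dt$ with $a_0(x,x)=\frac{1}{2\pi^{n+1}}$ coming from \eqref{e-gue201224yydq}. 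The purpose of this step is that $\partial_{x_{2n+1}}\Phi$ is now the constant $-1$, which is exactly what makes the next step clean.

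Second, I would use the freedom $a_0\mapsto a_0+c\Phi$ (which changes $a_1$ by $-nic$ but leaves the oscillatory integral unchanged modulo $\C^\infty$) to remove the $x_{2n+1}$-dependence of the leading symbol. Concretely, let $\widetilde a_0(x',w,y)$ be an almost analytic extension of $w\mapsto a_0(x',w,y)$ in the $(2n+1)$-th slot and set $\hat a_0(x',y):=\widetilde a_0(x',g(x',y),y)$, which is manifestly independent of $x_{2n+1}$. The substitution $w=g(x',y)$ is legitimate because $\operatorname{Im}g\geq0$ vanishes on the diagonal and therefore $\operatorname{Im}g=O(|x-y|^2)$. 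Expanding $\widetilde a_0$ along the segment from $w=g$ to $w=x_{2n+1}$, using $x_{2n+1}-g=-\Phi$ and $\partial_{\bar w}\widetilde a_0=O(|\operatorname{Im}w|^\infty)=O(|x-y|^\infty)$, I would obtain a decomposition
\[
a_0(x,y)=\hat a_0(x',y)+c(x,y)\Phi(x,y)+e(x,y),\qquad c\in\C^\infty(D\times D),\ \ e=O(|x-y|^\infty).
\]

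Finally, since an amplitude of the form $e\,t^n$ with $e=O(|x-y|^\infty)$ contributes a smoothing operator, replacing $a_0$ by $\hat a_0$ and adjusting $a_1$ accordingly produces a symbol representing the same $\Pi$ modulo $\C^\infty$ and satisfying $T\hat a_0=-\partial_{x_{2n+1}}\hat a_0=0$. On the diagonal, $\Phi(x,x)=0$ and $e(x,x)=0$ force $\hat a_0(x,x)=a_0(x,x)=\frac{1}{2\pi^{n+1}}$, so both assertions in \eqref{e-gue201224yydo} hold. The main obstacle is the second step: one must make sense of the ``freezing'' $x_{2n+1}=g(x',y)$ for the complex-valued $g$ by means of an almost analytic extension, and then verify that the resulting discrepancy is exactly of the admissible form $c\Phi$ up to an $O(|x-y|^\infty)$ (hence smoothing) error. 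The bound $\operatorname{Im}g=O(|x-y|^2)$ together with the defining property of almost analytic extensions are precisely what make this control possible.
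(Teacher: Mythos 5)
Your proof is correct and follows essentially the same route as the paper's: Malgrange preparation $\phi=f_1\Phi$, an almost analytic extension of $a_0$ evaluated at $x_{2n+1}=g(x',y)$ to produce the $T$-annihilated replacement $\hat a_0(x',y)$, and absorption of the leftover multiple of the phase into the next-order coefficient via integration by parts in $t$. The only cosmetic difference is that you first pass to the phase $\Phi$ itself using Lemma~\ref{lemma of osc}, whereas the paper keeps the original phase and integrates by parts directly against $e^{itf_1\Phi}$ (dividing by $f_1\neq0$); the underlying mechanism is identical.
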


\begin{proof}
Take local coordinates $x=(x_1,\ldots,x_{2n+1})$ of $X$ so that $T=-\frac{\pr}{\pr x_{2n+1}}$ on $D$. By the relation
$$
\mbox{$d_x\phi(x,x)=-\omega_0(x)$, for every $x\in D$}, 
$$
we know that $\frac{\partial \phi}{\partial x_{2n+1}}(x,x)=-1\neq 0$. Thus, applying the Malgrange preparation theorem~\cite[Theorem 7.5.5]{Hormander2003ALPDO1}, we have
\begin{equation}
\label{phi=f_1(-x_{2n+1}+g_1(x',y))}
    \phi(x,y)=f_1(x,y)(-x_{2n+1}+g_1(x',y))
\end{equation}
for some smooth functions $f_1(x,y)$, $g_1(x',y)$ with $f_1(x,x)=1$, for every $x\in D$, where $x'=(x_1,\ldots,x_{2n})$. Note that $g_1$ is independent of $x_{2n+1}$. 
By Taylor formula, we have
\begin{equation}\label{phi=f_1(x_2n+1+g_1(x',y))}
    a_0(x,y)=\tilde{a}_0(x,y)=\tilde{a}_0((x',g_1(x',y)),y)+\left(-x_{2n+1}+g_1(x',y)\right)R(x,y),
\end{equation}
where $\tilde{a}_0$ denotes an almost analytic extension of $a_0$ with respect to the real variable $x_{2n+1}$, $R(x,y)\in\C^\infty(D\times D)$. Let
\begin{equation}\label{e-gue201227yyd}
    \hat{a}_0(x',y):=\tilde{a}_0((x',g_1(x',y)),y)\in \C^\infty(D\times D).
\end{equation}
Then, by using integration by parts, we get 
\begin{align*}
    \int_0^\infty e^{it\phi(x,y)}a_0(x,y)t^ndt
    &=\int_0^\infty e^{it\phi(x,y)}\hat{a}_0(x',y)t^ndt-i\int_0^\infty \frac{d}{dt}\left(e^{itf_1(x,y)(-x_{2n+1}+g_1(x',y))}\right)\frac{R(x,y)}{f_1(x,y)}t^ndt\\
    &=\int_0^\infty e^{it\phi(x,y)}\hat{a}_0(x',y)t^ndt+i\int_0^\infty e^{it\phi(x,y)}\frac{R(x,y)}{f_1(x,y)}nt^{n-1}dt.
\end{align*}
Hence, we can replace $a_0(x,y)$ by $\hat{a}_0(x',y)$ in \eqref{Boutet-Sjostrand formula} and by \eqref{e-gue201224yydq}, we see that 
\[\hat{a}_0(x,x)=\frac{1}{2\pi^{n+1}},\ \ \mbox{for every $x\in D$}. \]
The lemma follows.
\end{proof}

From now on, we assume that $a_0(x,y)$ satisfies \eqref{e-gue201224yydo}. We have 

\begin{theorem}
\label{a_0 is a constant}
With the notations and assumptions used in Theorem~\ref{Boutet-Sjostrand-Hsiao formula}, 
we assume that 
\begin{equation}\label{e-gue201226yyd}
\begin{split}
&\mbox{$\ddbar_{b,x}(\phi(x,y))$ vanishes to infinite order at $x=y$},\\
&\mbox{$\ddbar_{b,y}(-\ol \phi(y,x))$ vanishes to infinite order at $x=y$}.
\end{split}
\end{equation}
We have
\begin{equation}\label{e-gue201224yydp}
    a_0(x,y)-\frac{1}{2\pi^{n+1}}=O(|x-y|^N),\ \ \mbox{for every $(x,y)\in D\times D$, for every $N\in\mathbb N$}. 
\end{equation}
\end{theorem}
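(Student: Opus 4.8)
The plan is to promote the two structural identities satisfied by the Szeg\H{o} projector to the level of the leading symbol. Since $\mathrm{Ran}\,\Pi\subset\mathrm{Ker}\,\ddbar_b$ we have $\ddbar_{b,x}\Pi=0$, and since $\Pi=\Pi^*$ its kernel obeys $\Pi(x,y)=\ol{\Pi(y,x)}$, so that $\Pi$ is annihilated in $y$ by the conjugate operator $\partial_b$ (the tangential $(1,0)$-derivative, characterized by $\ol{\partial_b u}=\ddbar_b\ol u$); that is, $\partial_{b,y}\Pi=0$. I would substitute the representation \eqref{Boutet-Sjostrand formula} into these identities and expand, using $\ddbar_{b,x}(e^{it\phi}a)=e^{it\phi}(it(\ddbar_{b,x}\phi)a+\ddbar_{b,x}a)$ and its conjugate analogue.

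First I would dispose of the top-order term. The hypothesis in \eqref{e-gue201226yyd} that $\ddbar_{b,x}\phi$ vanishes to infinite order on the diagonal makes the amplitude $it(\ddbar_{b,x}\phi)a$ of the $t^{n+1}$-term vanish to infinite order there; by the transversal positivity ${\rm Im\,}\phi(x,y)\geq c|x'-y'|^2$ (in the directions transverse to the Reeb field) together with integration by parts in $t$ in the Reeb direction, where $\phi$ is nondegenerate, this term contributes a smoothing operator. I am left with $\int_0^\infty e^{it\phi}(\ddbar_{b,x}a)\,dt\equiv 0\mod\C^\infty(D\times D)$, and comparing leading symbols — via the uniqueness mechanism behind Lemma~\ref{l-gue201224yyd}, or directly by the stationary phase expansion of Theorem~\ref{Hormander stationary phase formula} — forces $\ddbar_{b,x}a_0=O(|x-y|^\infty)$, i.e. the $T^{0,1}_x$-derivatives of $a_0$ vanish to infinite order on the diagonal. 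Repeating the argument for $\partial_{b,y}\Pi=0$ on the conjugate-transpose representation $\Pi(x,y)\equiv\int_0^\infty e^{-it\ol{\phi(y,x)}}\,\ol{a(y,x,t)}\,dt$, whose phase is governed by the second condition in \eqref{e-gue201226yyd}, yields symmetrically $\partial_{b,y}a_0=O(|x-y|^\infty)$, i.e. the $T^{1,0}_y$-derivatives of $a_0$ vanish to infinite order on the diagonal.

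It then remains to combine these with the two facts already secured in Lemma~\ref{a_0 is independent of x_2n+1}, namely $a_0(x,x)=\tfrac{1}{2\pi^{n+1}}$ and $Ta_0=0$. Writing $r:=a_0-\tfrac{1}{2\pi^{n+1}}$, I observe that at each point of the diagonal $\Delta=\{x=y\}$ the complexified normal bundle $\mathbb C N\Delta\cong\mathbb C T_xX=T^{1,0}_x\oplus T^{0,1}_x\oplus\mathbb C T_x$ is spanned by the $2n+1$ vector fields $\{\ol{Z}_{j,x}\}_{j=1}^n$, $\{Z_{j,y}\}_{j=1}^n$ and $T_x$ — the middle family being transverse to $\Delta$ because $(0,Z_j)\equiv-(Z_j,0)$ modulo $T\Delta$. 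Each of these annihilates $r$ to infinite order on $\Delta$: the first family by the first conclusion above, the second by the second, and $T_x$ exactly. Since $r|_\Delta=0$, every purely tangential derivative of $r$ vanishes on $\Delta$, while every derivative involving one of the transverse fields vanishes there to infinite order; an induction on the order of differentiation then gives $\pr_x^\alpha\pr_y^\beta r|_\Delta=0$ for all $\alpha,\beta$, which is the assertion \eqref{e-gue201224yydp}.

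The main obstacle is the first step: converting the two operator identities into the infinite-order symbol statements $\ddbar_{b,x}a_0,\partial_{b,y}a_0=O(|x-y|^\infty)$. Two points demand care. One is showing that the phase-derivative terms are genuinely smoothing, which is precisely where the infinite-order vanishing hypotheses \eqref{e-gue201226yyd} on $\phi$ and the transversal positivity of ${\rm Im\,}\phi$ are indispensable (here one must also keep careful track, under complex conjugation, of which of the two conditions in \eqref{e-gue201226yyd} feeds which equation). The other, more delicate, is upgrading the mere smoothness of $\int_0^\infty e^{it\phi}(\ddbar_{b,x}a)\,dt$ to infinite-order vanishing of the single coefficient $\ddbar_{b,x}a_0$ on the diagonal: a single stationary-phase comparison yields only finite-order vanishing at each symbol level, so one must feed these conclusions back through the $\ddbar_b$-equation — exploiting that $T_x$ differentiation corresponds to raising the symbol degree by one — to bootstrap to infinite order. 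The concluding linear-algebra-plus-Taylor induction is, by contrast, routine.
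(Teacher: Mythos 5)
Your overall skeleton — use $\ddbar_{b,x}\Pi=0$ and the adjoint relation $\Pi=\Pi^*$ to derive CR-type transport equations for $a_0$ in $x$ and $y$, then combine them with $a_0(x,x)=\frac{1}{2\pi^{n+1}}$ and $Ta_0=0$ in a spanning/Taylor induction along the diagonal — is the same strategy the paper follows, and your concluding normal-bundle argument is a clean reformulation of the paper's binomial induction \eqref{e-gue201226yydg}--\eqref{e-gue201226ycdII}. But there is a genuine gap in your first step, and it is exactly the point where the real work lies. From $\int_0^\infty e^{it\phi}(\ddbar_{b,x}a)\,dt\equiv 0$ you \emph{cannot} conclude $\ddbar_{b,x}a_0=O(|x-y|^\infty)$, neither by stationary phase nor by the uniqueness mechanism of Lemma~\ref{l-gue201224yyd}. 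The obstruction is the phase-ideal ambiguity that the paper itself emphasizes in the introduction: replacing $b_0$ by $b_0+c\phi$ and $b_1$ by $b_1-nic$ leaves $\int_0^\infty e^{it\phi}b\,dt$ unchanged modulo $\C^\infty$, so a symbol whose oscillatory integral is smoothing need only have $b_0=h\phi+O(|x-y|^\infty)$ for some smooth $h$. Accordingly, the correct conclusion of your first step is the paper's \eqref{e-gue201225yydy}: $(\ol L_j a_0)(x,y)=h_j(x,y)(-x_{2n+1}+g_1(x',y))+O(|x-y|^N)$, and the term $h_j\Phi$ vanishes only to \emph{first} order on the diagonal, so it cannot be discarded. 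Your proposed repair (``bootstrap through symbol degrees, since $T_x$ raises the degree by one'') addresses a different and lesser issue; it does not touch the ideal ambiguity.

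What removes the ideal term is precisely the normalization you postpone to the last step: since Lemma~\ref{a_0 is independent of x_2n+1} lets one take $a_0$ independent of $x_{2n+1}$, one can substitute (via almost analytic extension) $x_{2n+1}=g_1(x',y)$ into the transport equation, killing $h_j\Phi$ on the zero set of the Malgrange-prepared phase; but the resulting correction terms (coming from $\ol L_j=\pr_{\ol z_j}+O(|x|)$ and the substitution itself) are only small relative to how much $a_0-\frac{1}{2\pi^{n+1}}$ is already known to vanish. This is why the paper does \emph{not} establish infinite-order vanishing of $\ddbar_{b,x}a_0$ up front, but instead interleaves the transport equations with a pointwise induction on the vanishing order $N_0$ of $a_0-\frac{1}{2\pi^{n+1}}$: the induction hypothesis \eqref{e-gue201226yydb} is used to show $\pr_{\ol z_j}a_0=O(|(x,y)|^{N_0})$ and $\pr_{w_j}a_0=O(|(x,y)|^{N_0})$ (\eqref{e-gue201225ycdy}, \eqref{e-gue201225ycdz}), and only then does the combinatorial step push the order to $N_0+1$. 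Your statements ``$\ddbar_{b,x}a_0=O(|x-y|^\infty)$'' and ``$\pr_{b,y}a_0=O(|x-y|^\infty)$'' are true a posteriori — they follow from the theorem — but taking them as available inputs to your final induction makes the argument circular. The $y$-side setup via $\Pi(x,y)=\ol{\Pi(y,x)}$ and the second line of \eqref{e-gue201226yyd}, as well as the smoothing claim for the term $it(\ddbar_{b,x}\phi)a$, are fine.
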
 

\begin{proof}
Fix $p\in X$, let $x=(x_1,\ldots,x_{2n+1})$ be local coordinates of $X$ defined on $D$ with $x(p)=0$, $T=-\frac{\pr}{\pr x_{2n+1}}$ and 
\begin{equation}\label{e-gue201225yydx}
\begin{split}
&T^{1,0}X={\rm span\,}\{\ol L_j;\, j=1,\ldots,n\},\\
&\ol L_j=\frac{\pr}{\pr\ol z_j}+O(|x|),\ \ \frac{\pr}{\pr\ol z_j}=\frac{1}{2}(\frac{\pr}{x_{2j-1}}+i\frac{\pr}{\pr x_{2j}}),\ \ j=1,\ldots,n.
\end{split}
\end{equation}
From $\ddbar_b\Pi=0$ and $\ddbar_b\phi$ vanishes to infinite order at $x=y$, applying Malgrange preparation theorem \cite{Hormander2003ALPDO1}*{Theorem 7.5.6}, partial integration, Lemma \ref{lemma of osc}, Melin--Sj\"ostrand \cite{MS74}*{p. 172} and \cite{HM16}*{Section 8}, it is not difficult to check that 
\begin{equation}\label{e-gue201225yydy}
\begin{split}
&(\ol L_ja_0)(x,y)=h_j(x,y)(-x_{2n+1}+g_1(x',y))+O(|x-y|^N),\\
&\mbox{for every $N\in\mathbb N$},\ \ h_j\in \C^\infty(D\times D),\ \ j=1,\ldots,n,
\end{split}
\end{equation}
where $g_1\in\C^\infty(D\times D)$ is as in \eqref{phi=f_1(-x_{2n+1}+g_1(x',y))}. We claim that 
\begin{equation}\label{e-gue201226yydb}
\mbox{$|a_0(x,y)-\frac{1}{2\pi^{n+1}}|=O(|(x,y)|^N)$, for every $N\in\mathbb N_0$}. 
\end{equation}
It is clear that \eqref{e-gue201226yydb} holds for $N=0$. Suppose that \eqref{e-gue201226yydb} holds for $N=N_0$, $N_0\in\mathbb N_0$. 
We are going to prove that \eqref{e-gue201226yydb} 
holds for $N=N_0+1$. From \eqref{e-gue201225yydx}, \eqref{e-gue201225yydy}, \eqref{e-gue201226yydb} and induction assumption, we have 
\begin{equation}\label{e-gue201225ycdy}
\frac{\pr a_0}{\pr\ol z_j}(x',y)=O(|(x,y)|^{N_0}),\ \ j=1,\ldots,n.
\end{equation}
Similarly, we have 
\begin{equation}\label{e-gue201225ycdz}
\frac{\pr a_0}{\pr w_j}(x',y)=O(|(x,y)|^{N_0}),\ \ j=1,\ldots,n,
\end{equation}
where $\frac{\pr}{\pr w_j}=\frac{1}{2}(\frac{\pr}{\pr y_{2j-1}}-i\frac{\pr}{\pr y_{2j}})$, $j=1,\ldots,n$. 

Fix $j\in\{1,\ldots,n\}$ and fix $\alpha, \beta\in\mathbb N_0$, $\alpha+\beta=N_0$. 
From $a_0(x,x)=\frac{1}{2\pi^{n+1}}$, we have 
\begin{equation}\label{e-gue201226yydg}
\Bigr(\bigr((\frac{\pr}{\pr z_j}+\frac{\pr}{\pr  w_j})^\alpha(\frac{\pr}{\pr \ol z_j}+\frac{\pr}{\pr\ol w_j})^\beta\bigr)a_0\Bigr)(x,x)=0. 
\end{equation}
From \eqref{e-gue201226yydg}, we have 
\begin{equation}\label{e-gue201226yydh}
\begin{split}
&\Bigr((\frac{\pr}{\pr z_j})^{\alpha}(\frac{\pr}{\pr \ol w_j})^{\beta}a_0\Bigr)(0,0)\\
&=\sum_{\alpha_1, \alpha_2, \beta_1,\beta_2\in\mathbb N_0, \alpha_1+\alpha_2=\alpha, \beta_1+\beta_2=\beta, \alpha_2+\beta_1>0} c_{\alpha_1,\alpha_2,\beta_1,\beta_2}\Bigr((\frac{\pr}{\pr z_j})^{\alpha_1}(\frac{\pr}{\pr w_j})^{\alpha_2}(\frac{\pr}{\pr\ol z_j})^{\beta_1}(\frac{\pr}{\pr \ol w_j})^{\beta_2}a_0\Bigr)(0,0),
\end{split}
\end{equation}
where $c_{\alpha_1,\alpha_2,\beta_1,\beta_2}$ is a constant, for every $\alpha_1, \alpha_2, \beta_1,\beta_2\in\mathbb N_0$, $\alpha_1+\alpha_2=\alpha$, $\beta_1+\beta_2=\beta$, $\alpha_2+\beta_1>0$. Since $\alpha_2+\beta_1>0$, from \eqref{e-gue201225ycdy} and \eqref{e-gue201225ycdz}, we get 
\[\begin{split}
&\Bigr((\frac{\pr}{\pr z_j})^{\alpha_1}(\frac{\pr}{\pr w_j})^{\alpha_2}(\frac{\pr}{\pr\ol z_j})^{\beta_1}(\frac{\pr}{\pr \ol w_j})^{\beta_2}a_0\Bigr)(0,0)=0,\\
&\mbox{for every 
$\alpha_1, \alpha_2, \beta_1,\beta_2\in\mathbb N_0$, $\alpha_1+\alpha_2=\alpha$, $\beta_1+\beta_2=\beta$, $\alpha_2+\beta_1>0$}.\end{split}\]
From this observation and \eqref{e-gue201226yydh}, we get 
\begin{equation}\label{e-gue201226yyde}
\mbox{$\Bigr((\frac{\pr}{\pr z_j})^{\alpha}(\frac{\pr}{\pr \ol w_j})^{\beta}a_0\Bigr)(0,0)=0$, for every $\alpha, \beta\in\mathbb N_0$, $\alpha+\beta=N_0$}. 
\end{equation}
We can repeat the proof of \eqref{e-gue201226yyde} with minor change and deduce that 
\begin{equation}\label{e-gue201226ycd}
\mbox{$\Bigr((\frac{\pr}{\pr z})^{\alpha}(\frac{\pr}{\pr \ol w})^{\beta}a_0\Bigr)(0,0)=0$, for every $\alpha, \beta\in\mathbb N^n_0$, $|\alpha|+|\beta|=N_0$}. 
\end{equation}

Since $a_0$ is independent of $x_{2n+1}$ and $a_0(x',x)=\frac{1}{2\pi^{n+1}}$, we have 
\begin{equation}\label{e-gue201226ycdI}
\Bigr((\frac{\pr}{\pr y_{2n+1}})^Na_0\Bigr)(x,x)=0,\ \ \mbox{for every $N\in\mathbb N$}. 
\end{equation}
From \eqref{e-gue201226ycdI}, we can repeat the proof of \eqref{e-gue201226yyde} with minor change and deduce that 
\begin{equation}\label{e-gue201226ycdII}
\mbox{$\Bigr((\frac{\pr}{\pr z})^{\alpha}\frac{\pr}{\pr \ol w})^{\beta}(\frac{\pr}{\pr y_{2n+1}})^{\gamma}a_0\Bigr)(0,0)=0$, for every $\alpha, \beta, \gamma\in\mathbb N^n_0$, $|\alpha|+|\beta|+|\gamma|=N_0$}. 
\end{equation}

From \eqref{e-gue201225ycdy}, \eqref{e-gue201225ycdz}, \eqref{e-gue201226ycd}, \eqref{e-gue201226ycdII}, we get 
\[|a_0(x,y)-\frac{1}{2\pi^{n+1}}|=O(|(x,y)|^{N_0+1}).\]
By induction, we get the claim \eqref{e-gue201226yydb}. From \eqref{e-gue201226yydb}, the theorem follows. 
\end{proof}

 \subsection{Calculation of the coefficients}\label{s-gue201226yyd}

To calculate $a_1(x,x)$ in \eqref{Boutet-Sjostrand formula}, we need to choose some good coordinates. 
We will work with the assumptions as in Theorem~\ref{Boutet-Sjostrand-Hsiao formula}. Fix $p\in X$. Since $\ddbar_b$ has closed range, from~\cite{Boutet1975CRIF}, we see that there is an open set $U$ of $p$ such that $U$ can be CR embedded into $\mathbb C^{n+1}$. From now on, we identify $U$ with $\pr M\bigcap\mathring{D}$, where 
\begin{equation}\label{e-gue201226ycdIII}
\begin{split}
 &\pr M:=\{z\in\mathbb C^{n+1};\, r(z)=0\},\\
 &r(z)\in\C^\infty(\mathbb C^{n+1}, \mathbb R),\\
 &\mbox{$J(dr)=1$ on $\pr M$, $J$ is the standard complex structure on $\mathbb C^{n+1}$},\\
&\mbox{$\mathring{D}$ is an open set of $p$ in $\mathbb C^{n+1}$}. 
\end{split}
\end{equation} 
From~\cite[Lemma 3.2]{Hormander2004NO}, we can find local holomorphic coordinates $x=(x_1,\ldots,x_{2n+2})=z=(z_1,\ldots,z_{n+1})$, $z_j=x_{2j-1}+ix_{2j}$, $j=1,\ldots,n+1$, defined on $\mathring{D}$ (we assume that $\mathring{D}$ is small enough) such that 
\begin{equation}\label{e-gue201226ycdIV}
\begin{split}
&z(p)=0,\\
&r(z)=2\mathrm{Im\,}z_{n+1}+\sum_{j=1}^n|z_j|^2+O(|(z_1,\cdots,z_{n+1}|)^4).
\end{split}
\end{equation} 
From now on, we assume that \eqref{e-gue201226ycdIV} hold. It is well-known that (see~\cite[Proposition 1.1, Theorem 1.5]{BouSj76}) we can take $\phi\in\C^\infty(U\times U)$ in \eqref{Boutet-Sjostrand formula} so that $\phi$ satisfies \eqref{e-gue201217yyd} and 
\begin{equation}\label{e-gue201226ycda}
\begin{split}
&\phi=\mathring{\phi}|_{U\times U},\\
&\mathring{\phi}(z,w)=\frac{1}{i}\sum_{\alpha,\beta\in\mathbb{N}^{n+1}_0,|\alpha|+|\beta|\leq N}\frac{\partial^{\alpha+\beta}r}{\partial z^\alpha\partial\overline{z}^\beta}(0)\frac{z^\alpha}{\alpha!}\frac{\overline{w}^\beta}{\beta!}+O(|(z,w)|^{N+1}),\  \ \mbox{for every $N\in\mathbb N$}.
\end{split}
\end{equation}
From \eqref{e-gue201226ycda}, we can check that 
\begin{equation}\label{e-gue201226ycdb}
\begin{split}
&\mbox{$\ddbar_{b,x}(\phi(x,y))$ vanishes to infinite order at $x=y$},\\
&\mbox{$\ddbar_{b,y}(-\ol \phi(y,x))$ vanishes to infinite order at $x=y$}.
\end{split}
\end{equation}
From now on, we assume that $\phi$ satisfies \eqref{e-gue201217yyd} and \eqref{e-gue201226ycda}.

From implicit function theorem, if $\mathring{D}$ is small enough, we can find $R(x_1,\ldots,x_{2n+1})\in\C^\infty(D)$ such that 
\begin{equation}\label{e-gue201226ycdV}
\mbox{for every $x\in\mathring{D}$,$(x_1,\ldots,x_{2n+1})\in D$, $x\in U$ if and only if  $x_{2n+2}=R(x_1,\ldots,x_{2n+1})$},
\end{equation}
where $D$ is an open set of $\mathbb R^{2n+1}$, $0\in D$.  From now on, we assume that $\mathring{D}$ is small enough so that \eqref{e-gue201226ycdV} holds. Let $x=(x_1,\ldots,x_{2n+1})$ be local coordinates of $D$ given by the map 
\begin{equation}\label{e-gue201226ycdVI}
(x_1,\ldots,x_{2n+1})\in D\To (x_1,\ldots,x_{2n+1},R(x_1,\ldots,x_{2n+1}))\in U. 
\end{equation}
From now on, we identify $U$ with $D$ and we will work with local coordinates $x=(x_1,\ldots,x_{2n+1})$ as \eqref{e-gue201226ycdVI}. The following follows from some straightforward calculation.  We omit the details. 

\begin{proposition}
\label{geometric datum on X}
With the notations used above, we have 
\begin{equation}
\label{derivative of R at 0}
\begin{split}
&R(x)=-\frac{1}{2}\sum^{2n}_{j=1}x^2_j+O(|x|^4),\\
&\frac{\pr^2R}{\pr z_j\pr\ol z_k}(0)=-\frac{1}{2}\delta_{j,k},\ \ j,k=1,\ldots,n,
\end{split}
\end{equation}
\begin{equation}
\label{omega_0 in Chern-Morser-Hormander coordinates}
 \omega_0(x)=dx_{2n+1}-i\sum_{j=1}^n\left(\frac{\partial R}{\partial z_j}dz_j-\frac{\partial R}{\partial\overline{z}_j}d\overline{z}_j\right)+O(|x|^4),
\end{equation}
\begin{equation}
\label{d omega_0 in Chern-Morser-Hormander coordinates}
    d\omega_0(x)=2i\sum_{j=1,k}^n\frac{\partial^2 R}{\partial z_j\partial\overline{z}_k}dz_j\wedge d\overline{z}_k+O(|x|^3),
\end{equation}
\begin{equation}
\label{T^1,0 in Chern-Morser-Hormander coordinates}
    T^{1,0}_x X=\mathrm{span}\left\{\frac{\partial}{\partial z_j}+i\frac{\partial R}{\partial z_j}\frac{\partial}{\partial x_{2n+1}}+O(|x|^4)\right\}_{j=1}^n,
\end{equation}
\begin{equation}
\label{T in Chern-Morser-Hormander coordinates}
    T(x)=-\frac{\partial}{\partial x_{2n+1}}+O(|x|^2).
\end{equation}
In particular, the volume form on $X$ given by
\begin{equation}
    \lambda(x)dx_1\cdots dx_{2n+1}=\frac{1}{n!}\left(\frac{-d\omega_0}{2}\right)^n\wedge\omega_0
\end{equation}
satisfies 
\begin{equation}
\label{lambda(0)=1}
\begin{split}
&\lambda(0)=1,\\
&\frac{\pr\lambda}{\pr x_j}(0)=0,\ \ j=1,\ldots,2n+1.
\end{split}
\end{equation}
\end{proposition}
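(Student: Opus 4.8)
The whole proposition is local and can be read off from the explicit embedding $U\cong\pr M\cap\mathring D$ together with the normalized defining function \eqref{e-gue201226ycdIV}; the plan is to compute every object by restricting the ambient $\mathbb C^{n+1}$ data to the graph $\pr M$ and Taylor expanding. The single device that organizes everything is to describe the hypersurface through its graph defining function
\[
F:=x_{2n+2}-R(x_1,\ldots,x_{2n+1}),\qquad \pr M=\{F=0\},
\]
and to take the contact form in the shape $\omega_0=i(\pr F-\ddbar F)\big|_{\pr M}$, which annihilates $T^{1,0}X\oplus T^{0,1}X$, carries the sign needed for strong pseudoconvexity, and satisfies $\omega_0(-\pr/\pr x_{2n+1})(0)=-1$. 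Its virtue is the identity $d\omega_0=-2i\,\pr\ddbar F|_{\pr M}=2i\,\pr\ddbar R|_{\pr M}$ (since $x_{2n+2}=\mathrm{Im}\,z_{n+1}$ is pluriharmonic), so that both $\omega_0$ and $d\omega_0$ are expressed through $R$ alone.

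First I would establish \eqref{derivative of R at 0}: solving $r=0$ for $x_{2n+2}$ by the implicit function theorem and using $\sum_{j=1}^n|z_j|^2=\sum_{j=1}^{2n}x_j^2$ together with $x_{2n+2}=O(|x|^2)$ on the graph turns the ambient $O(|z|^4)$ into $O(|x|^4)$, giving $R=-\tfrac12\sum_{j=1}^{2n}x_j^2+O(|x|^4)$ and $\frac{\pr^2R}{\pr z_j\pr\ol z_k}(0)=-\tfrac12\delta_{j,k}$. Next, expanding $\pr F,\ddbar F$ and pulling back via $dx_{2n+2}|_{\pr M}=dR$ yields \eqref{omega_0 in Chern-Morser-Hormander coordinates}, the only point to check being that the transverse term $\frac{\pr R}{\pr x_{2n+1}}\,dx_{2n+2}|_{\pr M}$ is of order $O(|x|^4)$; then $d\omega_0=2i\,\pr\ddbar R|_{\pr M}$, with the $z_{n+1}$-indexed entries $\frac{\pr^2R}{\pr z_j\pr\ol z_{n+1}}$ discarded, produces \eqref{d omega_0 in Chern-Morser-Hormander coordinates}. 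For \eqref{T^1,0 in Chern-Morser-Hormander coordinates} I would take the ambient CR vectors $L_j=\frac{\pr}{\pr z_j}-\frac{\pr r/\pr z_j}{\pr r/\pr z_{n+1}}\frac{\pr}{\pr z_{n+1}}$ spanning $T^{1,0}\mathbb C^{n+1}\cap\mathbb CT\pr M$, rewrite $\frac{\pr}{\pr z_{n+1}}$ in the graph chart (dropping the purely transverse $\frac{\pr}{\pr x_{2n+2}}$ component), and identify the resulting coefficient $-\tfrac i2\ol z_j+\cdots$ of $\frac{\pr}{\pr x_{2n+1}}$ with $i\,\frac{\pr R}{\pr z_j}$. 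The field $T$ in \eqref{T in Chern-Morser-Hormander coordinates} is then pinned down from $\omega_0(T)=-1$ and $d\omega_0(T,\cdot)=0$: as $d\omega_0$ is, to leading order, the standard symplectic form in the $z'$-directions, its kernel forces $T=-\frac{\pr}{\pr x_{2n+1}}+O(|x|^2)$. Finally \eqref{lambda(0)=1} follows by wedging: $\tfrac{-d\omega_0}{2}=\sum_j dx_{2j-1}\wedge dx_{2j}+O(|x|^2)$ and $\omega_0=dx_{2n+1}+O(|x|)$ give $\tfrac1{n!}\bigl(\tfrac{-d\omega_0}{2}\bigr)^n\wedge\omega_0=(1+O(|x|^2))\,dx_1\wedge\cdots\wedge dx_{2n+1}$, whence $\lambda(0)=1$ and $d\lambda(0)=0$.

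The only real obstacle is the bookkeeping of the remainder orders, and this is where the precise Chern--Moser--H\"ormander normalization enters rather than the bare estimate $r=2\mathrm{Im}\,z_{n+1}+\sum|z_j|^2+O(|z|^4)$. Two structural consequences must be extracted and used repeatedly. First, the absence of cubic terms in $r$ forces $R$ to have no cubic part, so $\frac{\pr^2R}{\pr z_j\pr\ol z_k}=-\tfrac12\delta_{j,k}+O(|x|^2)$ \emph{with no linear term}; this is exactly what kills the first derivatives of $\lambda$ at $0$ in \eqref{lambda(0)=1}. Second, the quartic part of $r$ may be taken independent of $z_{n+1}$, so that $\frac{\pr R}{\pr x_{2n+1}}=O(|x|^4)$ and $\frac{\pr r}{\pr z_{n+1}}=-i+O(|z|^4)$; this is what makes the transverse and $z_{n+1}$-indexed contributions high order, yielding the $O(|x|^3)$ remainder in \eqref{d omega_0 in Chern-Morser-Hormander coordinates} and the $O(|x|^4)$ remainder in \eqref{T^1,0 in Chern-Morser-Hormander coordinates} (the cubic pieces $-\tfrac i2\,\frac{\pr r_4}{\pr z_j}$ there cancelling against the cubic part of $i\,\frac{\pr R}{\pr z_j}$). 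Once these two facts are in hand each formula reduces to a short expansion, and the remaining risk is purely that of mis-tracking an order or a factor of $i$.
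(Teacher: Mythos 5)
Your proposal is, in substance, the ``straightforward calculation'' that the paper omits (the authors give no proof of this proposition at all), and most of it checks out: the graph defining function $F=x_{2n+2}-R$ with $\omega_0=i(\pr F-\ddbar F)|_{\pr M}$ and the identity $d\omega_0=-2i\pr\ddbar F|_{\pr M}=2i\pr\ddbar R|_{\pr M}$; the implicit-function-theorem expansion giving \eqref{derivative of R at 0}; the ambient CR fields $L_j=\frac{\pr}{\pr z_j}-\frac{\pr r/\pr z_j}{\pr r/\pr z_{n+1}}\frac{\pr}{\pr z_{n+1}}$ for \eqref{T^1,0 in Chern-Morser-Hormander coordinates}; the Reeb characterization for \eqref{T in Chern-Morser-Hormander coordinates}; and the wedge computation for \eqref{lambda(0)=1}. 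You also correctly isolate the absence of cubic terms in $r$ (hence in $R$) as the fact responsible for $d\lambda(0)=0$, and the cubic cancellation you describe in \eqref{T^1,0 in Chern-Morser-Hormander coordinates} is real --- in fact it is automatic and, like the $O(|x|^4)$ bound on the transverse term in \eqref{omega_0 in Chern-Morser-Hormander coordinates}, needs only the absence of cubic terms, not your second normalization.

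The gap is precisely that second normalization: the claim that ``the quartic part of $r$ may be taken independent of $z_{n+1}$'' is asserted, not proved, and it does not follow from \eqref{e-gue201226ycdIV}, which is all the paper records from H\"ormander's lemma. That normalization still allows quartic terms such as $2\,\mathrm{Re}(z_1^2\ol z_1)\,x_{2n+1}$; for such $r$ one has $\frac{\pr^2R}{\pr z_j\pr x_{2n+1}}=O(|x|^2)$ only, and $d\omega_0$ then contains $-i\sum_j\frac{\pr^2R}{\pr x_{2n+1}\pr z_j}\,dx_{2n+1}\wedge dz_j$ plus its conjugate, so \eqref{d omega_0 in Chern-Morser-Hormander coordinates} holds only with remainder $O(|x|^2)$, not $O(|x|^3)$; the same terms make the conformal factor between your form $i(\pr F-\ddbar F)|_{\pr M}$ and the contact form induced by $r$ equal to $1+O(|x|^3)$ rather than $1+O(|x|^4)$, which is exactly at the precision claimed in \eqref{omega_0 in Chern-Morser-Hormander coordinates}, so the identification of your $\omega_0$ with the one fixed by the (admittedly cryptic) condition in \eqref{e-gue201226ycdIII} also hinges on this point. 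To close the gap you must do one of two things. Either invoke the Chern--Moser normal form: a further holomorphic change of coordinates, compatible with \eqref{e-gue201226ycdIV}, makes the degree-four part of $r$ of type $(2,2)$ in $(z',\ol z')$ alone, hence independent of $z_{n+1}$ --- this is evidently what the authors intend, since their equation labels speak of ``Chern--Moser--H\"ormander coordinates'', but it needs a citation or a short proof, as it is not contained in the lemma the paper quotes. Or else keep the weaker remainder $O(|x|^2)$ in \eqref{d omega_0 in Chern-Morser-Hormander coordinates} and verify it suffices for everything downstream: the offending terms are multiples of $dx_{2n+1}$ with $O(|x|^2)$ coefficients, so they pair to $O(|x|^3)$ against $L_j,\ol L_k$ (whose $dx_{2n+1}$-components are $O(|x|)$) and contribute only $O(|x|^3)$ to $\lambda$, so \eqref{lambda(0)=1} and the curvature identities of the final subsection survive. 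As written, your argument silently assumes a normalization stronger than the one the setup provides, and the stated remainders genuinely fail without it.
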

 
 We also need 
 
 \begin{proposition}
\label{phase function mathring phi}
With the same notations above, we have 
\begin{equation}\label{e-gue201227yydp}
    \phi(x,y)=-x_{2n+1}+y_{2n+1}+\frac{i}{2}\sum_{j=1}^n\left[|z_j-w_j|^2+(\overline{z}_jw_j-z_j\overline{w}_j)\right]+O\left(|(x,y)|^4\right), 
\end{equation}
\begin{equation}\label{e-gue201227yydq}
\begin{split}
&\frac{\pr^4\phi}{\pr z_j\pr z_k\pr\ol z_\ell\pr\ol z_s}(0,0)=-i\frac{\pr^4R}{\pr z_j\pr z_k\pr\ol z_\ell\pr\ol z_s}(0),\ \ j, k, \ell, s\in\{1,\ldots,n\},\\
&\frac{\pr^4\phi}{\pr w_j\pr w_k\pr\ol w_\ell\pr\ol w_s}(0,0)=-i\frac{\pr^4R}{\pr z_j\pr z_k\pr\ol z_\ell\pr\ol z_s}(0),\ \ j, k, \ell, s\in\{1,\ldots,n\},
\end{split}
\end{equation}
where $\frac{\pr}{\pr w_j}=\frac{1}{2}(\frac{\pr}{\pr y_{2j-1}}-i\frac{\pr}{\pr y_{2j}})$, $j=1,\ldots,n$, and 
\begin{equation}\label{e-gue201227yydr}
    T^2\phi(0,0)=0.
\end{equation}
\end{proposition}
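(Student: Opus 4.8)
The plan is to obtain all three statements by inserting the graph description $z_{n+1}=x_{2n+1}+iR(x')$, $w_{n+1}=y_{2n+1}+iR(y')$ of $U\times U$ into the explicit formula \eqref{e-gue201226ycda} for $\mathring\phi$, and then bookkeeping with the expansion of $R$ from Proposition~\ref{geometric datum on X}. First I would record the low order Taylor coefficients of $r$ at $0$ coming from \eqref{e-gue201226ycdIV}: up to second order the only nonzero ones are $\frac{\pr r}{\pr z_{n+1}}(0)=-i$, $\frac{\pr r}{\pr\ol z_{n+1}}(0)=i$ and $\frac{\pr^2 r}{\pr z_j\pr\ol z_k}(0)=\delta_{jk}$ for $j,k\le n$, and, crucially, $r$ has \emph{no} terms of order three. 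Feeding these into \eqref{e-gue201226ycda} and using that $\mathring\phi$ is holomorphic in $z$ and anti-holomorphic in $w$ gives
\[
\mathring\phi(z,w)=-z_{n+1}+\ol w_{n+1}-i\sum_{j=1}^n z_j\ol w_j+O\!\left(|(z,w)|^4\right),
\]
the error being quartic precisely because $r$ has no cubic part.

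Next I would restrict to $U\times U$ and insert $R(x')=-\tfrac12\sum_{j=1}^n|z_j|^2+O(|x|^4)$ from \eqref{derivative of R at 0}, noting $-z_{n+1}=-x_{2n+1}+\tfrac i2\sum_j|z_j|^2+O(|x|^4)$ and $\ol w_{n+1}=y_{2n+1}+\tfrac i2\sum_j|w_j|^2+O(|y|^4)$. Since each occurrence of $z_{n+1}$ or $w_{n+1}$ acquires degree $\ge1$ under the substitution, the total degree cannot drop, so the linear and quadratic part of $\phi$ is produced solely by the three displayed leading terms; collecting them and using the algebraic identity $|z_j-w_j|^2+\ol z_j w_j-z_j\ol w_j=|z_j|^2+|w_j|^2-2z_j\ol w_j$ reproduces exactly the quadratic expression of \eqref{e-gue201227yydp}, while the absence of cubic terms in both $r$ and $R$ forces the remainder to be $O(|(x,y)|^4)$. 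Statement \eqref{e-gue201227yydr} then follows at once: by \eqref{T in Chern-Morser-Hormander coordinates} we have $T=-\pr_{x_{2n+1}}+V$ with $V$ a vector field whose coefficients are $O(|x|^2)$; since those coefficients and their first derivatives vanish at $0$, all cross terms drop and $T^2\phi(0,0)=\pr^2_{x_{2n+1}}\phi(0,0)$, which is zero because \eqref{e-gue201227yydp} contains no $x_{2n+1}^2$ term.

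For \eqref{e-gue201227yydq} I would set $y=0$, so $w=0$, and observe that $\mathring\phi(z,0)=-z_{n+1}+\tfrac1i P(z)+O(|z|^5)$, where $P$ is the holomorphic, degree four part of $r$; hence $\phi(x,0)=-x_{2n+1}-iR(x')+\tfrac1i P(z(x))+O(|x|^5)$. The only genuinely delicate point, and the crux of the whole proposition, is that the mixed derivative $\frac{\pr^4}{\pr z_j\pr z_k\pr\ol z_\ell\pr\ol z_s}$ at $0$ kills the $P$-contribution. Indeed the horizontal anti-holomorphic dependence of $\phi(x,0)$ enters only through $R$ sitting inside $z_{n+1}$, and every factor of $R$ is at least quadratic of type $(1,1)$; producing two horizontal $\ol z$'s therefore requires either one quartic factor of $R$ or two quadratic ones, which, combined with the homogeneity of degree four of the holomorphic polynomial $P$ in $(z_1,\dots,z_{n+1})$, forces the total degree to exceed four. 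Thus $P(z(x))$ has no type $(2,2)$, degree four horizontal part, and the entire mixed fourth derivative comes from $-iR(x')$, giving the first identity of \eqref{e-gue201227yydq}. The second identity is obtained identically by instead setting $x=0$ and using that $\mathring\phi$ is anti-holomorphic in $w$ with $\ol w_{n+1}=y_{2n+1}-iR(y')$.
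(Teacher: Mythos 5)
Your proposal is correct and follows essentially the same route as the paper: expand $\mathring\phi$ via \eqref{e-gue201226ycda} using the Taylor coefficients of $r$ from \eqref{e-gue201226ycdIV}, restrict to the graph $z_{n+1}=x_{2n+1}+iR(x)$ to get \eqref{e-gue201227yydp}, set $y=0$ (resp.\ $x=0$) and argue that the holomorphic quartic part of $r$ cannot contribute to the mixed $(2,2)$ derivative at the origin, and deduce \eqref{e-gue201227yydr} from $T=-\pr_{x_{2n+1}}+O(|x|^2)$. In fact your degree-counting argument makes explicit the step the paper leaves implicit in passing from \eqref{e-gue201227yydx} to \eqref{e-gue201227yydy}; the only cosmetic imprecision is writing $R(x')$ where the paper's $R$ depends on all of $(x_1,\ldots,x_{2n+1})$, which does not affect the counting since only $R=O(|x|^2)$ is used.
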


\begin{proof}
From \eqref{e-gue201226ycda}, we have 
\begin{equation}
\label{Taylor formula of phi(x,y)}
    \begin{split}
    \phi(x,y)
    &=-x_{2n+1}+y_{2n+1}-i[R(x)+R(y)+\sum_{j=1}^n z_j\overline{w}_j]+O\left(|(x,y)|^4\right)\\
    &=-x_{2n+1}+y_{2n+1}+\frac{i}{2}\sum_{j=1}^n \left(|z_j|^2-2z_j\overline{w_j}+|w_j|^2\right)+O\left(|(x,y)|^4\right)\\
    &=-x_{2n+1}+y_{2n+1}+\frac{i}{2}\sum_{j=1}^n\left[|z_j-w_j|^2+(\overline{z}_jw_j-z_j\overline{w}_j)\right]+O\left(|(x,y)|^4\right)
\end{split}
\end{equation} 
and 
\begin{equation}
\label{e-gue201227yydx}
    \begin{split}
    &\phi(x,0)\\
    &=-x_{2n+1}-iR(x)\\
    &+\frac{1}{i}\sum_{\alpha_j\in\mathbb N_0, j=1,\ldots,n+1,\alpha_1+\cdots+\alpha_{n+1}=4}\frac{1}{\alpha_1!\cdots\alpha_{n+1}!}\frac{\pr^4r}{\pr z^{\alpha_1}_1\cdots\pr z^{\alpha_{n+1}}_{n+1}}(0)z^{\alpha_1}_1\cdots z^{\alpha_n}_n(x_{2n+1}+iR(x))^{\alpha_{n+1}}\\
    &\quad+O\left(|x|^5\right).
\end{split}
\end{equation}
From \eqref{e-gue201227yydx}, we get 
\begin{equation}\label{e-gue201227yydy}
\frac{\pr^4\phi}{\pr z_j\pr z_k\pr\ol z_\ell\pr\ol z_s}(0,0)=-i\frac{\pr^4R}{\pr z_j\pr z_k\pr\ol z_\ell\pr\ol z_s}(0),\ \ j, k, \ell, s\in\{1,\ldots,n\}.
\end{equation}
Similarly, 
\begin{equation}\label{e-gue201227yydz}
\frac{\pr^4\phi}{\pr w_j\pr w_k\pr\ol w_\ell\pr\ol w_s}(0,0)=-i\frac{\pr^4R}{\pr z_j\pr z_k\pr\ol z_\ell\pr\ol z_s}(0),\ \ j, k, \ell, s\in\{1,\ldots,n\}.
\end{equation}
From \eqref{Taylor formula of phi(x,y)}, \eqref{e-gue201227yydy} and \eqref{e-gue201227yydz}, we get \eqref{e-gue201227yydp} and \eqref{e-gue201227yydq}. 
   
Finally, because for all $x$ near $p$,
\begin{equation*}
    T(x)=-\frac{\partial}{\partial x_{2n+1}}+O(|x|^2),
\end{equation*}
it is clear that
\begin{equation}
    T^2\phi(0,0)=0.
\end{equation}
\end{proof}

 By Malgrange preparation theorem~\cite[Theorem 7.5.5]{Hormander2003ALPDO1} again, we have 
 \begin{equation}\label{e-gue201228yyd}
 \begin{split}
 &\phi(x,y)=f(x,y)\Phi(x,y)\ \ \mbox{on $D$},\\
&\Phi(x,y)=-x_{2n+1}+g(x',y),
\end{split}
\end{equation}
where $f(x,y), g(x',y)\in\C^\infty(D\times D)$, $x'=(x_1,\ldots,x_{2n})$. From Proposition~\ref{phase function mathring phi}, it is straightforward to check that 
\begin{equation}\label{e-gue201228yyda}
\begin{split}
&f(x,y)=1+O(|(x,y)|^3),\\
&\mbox{$\Phi(x,y)$ satisfies \eqref{e-gue201227yydp}, \eqref{e-gue201227yydq} and \eqref{e-gue201227yydr}}.
\end{split}
\end{equation} 
We write 
\begin{equation}\label{e-gue201228yydb}
\Pi(x,y)\equiv\int_0^\infty e^{it\phi(x,y)}a(x,y,t)dt\equiv\int e^{it\Phi(x,y)}A(x,y,t)dt,
\end{equation}
where $a(x,y,t), A(x,y,t)\in S^n_{\mathrm{cl}}(D\times D\times\mathbb{R}_+)$,
 \begin{equation}\label{e-gue201228yydc}
 \begin{split}
 &a(x,y,t)\sim\sum_{j=0}^\infty a_j(x,y)t^{n-j}~\text{in}~S^n_{\mathrm{cl}}(D\times D\times\mathbb{R}_+),~a_j(x,y)\in \mathscr{C}^\infty(D\times D)~\text{for all}~j\in\mathbb{N}_0,\\
    &A(x,y,t)\sim\sum_{j=0}^\infty A_j(x,y)t^{n-j}~\text{in}~S^n_{\mathrm{cl}}(D\times D\times\mathbb{R}_+),~A_j(x,y)\in \mathscr{C}^\infty(D\times D)~\text{for all}~j\in\mathbb{N}_0.
    \end{split}
 \end{equation}
We can repeat the proof of Lemma~\ref{a_0 is independent of x_2n+1} and conclude that we can take $a_j(x,y)$, $A_j(x,y)$ are independent of $x_{2n+1}$, for every $j$. From now on, we assume that $a_j(x,y)$, $A_j(x,y)$ are independent of $x_{2n+1}$, for every $j$, and hence 
\begin{equation}\label{e-gue201228yydd}
 \begin{split}
 &a_j(x,y)=a_j(x',y),\ \ j=0,1,\ldots,\\
 &A_j(x,y)=A_j(x',y),\ \ j=0,1,\ldots. 
 \end{split}
 \end{equation}
Since $\phi$ satisfies \eqref{e-gue201226ycdb}, we can repeat the proof of Theorem~\ref{a_0 is a constant} with minor change and deduce that 
\begin{equation}\label{e-gue201228yyde}
\mbox{$a_0(x',y)=\frac{1}{2\pi^{n+1}}+O(|(x,y)|^N)$, for every $N\in\mathbb N$}. 
\end{equation} 
Moreover, from Lemma~\ref{lemma of osc} and the proof of  Lemma~\ref{a_0 is independent of x_2n+1}, we can take $A_0(x',y)$ to be 
\begin{equation}\label{e-gue201228yydf}
A_0(x',y)=a_0(x',y)\frac{1}{(\Td f((x',g(x',y)),y))^{n+1}}, 
\end{equation} 
where $\Td f$ is an almost analytic extension of $f$. From \eqref{e-gue201228yyda}, \eqref{e-gue201228yyde} and \eqref{e-gue201228yydf} and , it is easy to see that 
\begin{equation}\label{e-gue201228yydg}
\mbox{$A_0(x',y)=\frac{1}{2\pi^{n+1}}+O(|(x,y)|^3)$}. 
\end{equation} 
From Lemma~\ref{l-gue201224yyd}, we see that to prove Theorem~\ref{main theorem}, we only need to calculate $a_1(0,0)$. Note $(T^2\phi)(0,0)=0$, $(T^2\Phi)(0,0)=0$. 
From this observation, we can repeat the proof of Lemma~\ref{l-gue201224yyd} and deduce that 
\begin{equation}\label{e-gue201228yydI}
a_1(0,0)=A_1(0,0). 
\end{equation} 
Hence,  to prove Theorem~\ref{main theorem}, we only need to calculate $A_1(0,0)$. Now, we are going to calculate $A_1(0,0)$. We apply the projection relation 
\begin{equation*}
    \Pi=\Pi^2,
\end{equation*}
or equivalently, in the sense of oscillatory integral
\begin{equation}
\Pi(x,y)\equiv\int_D\Pi(x,w)\Pi(w,y)\lambda(w)dw~\mathrm{mod}~\mathscr{C}^\infty(D\times D),
\end{equation}
to compute $A_1(0,0)=a_1(0,0)$, where $\lambda(w)dw$ is the volume form on $X$. Now, shrink $D$ if necessary. From
\[
\Pi(x,y)\in \mathscr{C}^\infty(X\times X\setminus\mathrm{diag}{X\times X}),
\]
 we may assume that all the base variables $x,y,w\in D$ are within a compact set. Then, in the sense of oscillatory integral,
\    \begin{equation}
\label{microlocal Pi=Pi^2}
\begin{split}
    ~
    &\int_0^\infty e^{it\Phi(x,y)}A(x,y,t)dt\\
    &\equiv\iiint_{D\times\mathbb{R}_+\times\mathbb{R}_+} e^{it\Phi(x,w)+is\Phi(w,y)}A(x,w,t)A(w,y,s)\lambda(w)dwdsdt\\
    &\equiv\int_0^\infty\left(\iint_{D\times\mathbb{R}_+} e^{it\Phi(x,w)+it\sigma\Phi(w,y)}{tA(x,w,t)A(w,y,t\sigma)}\lambda(w)dwd\sigma\right)dt\\
    &\equiv\int_0^\infty\left(\iint_{D\times\mathbb{R}_+} e^{it(-x_{2n+1}+g(x',w)+\sigma(-w_{2n+1}+g(w',y))}{tA(x,w,t)A(w,y,t\sigma)}\lambda(w)dwd\sigma\right)dt.
\end{split}
\end{equation}
    Consider the phase function 
\[
\Psi(w,\sigma,x,y):=-x_{2n+1}+g(x',w)+\sigma(-w_{2n+1}+g(w',y)).\]
    It is clear that 
$\mathrm{Im}\Psi\geq 0$ for $\sigma\geq 0$. Also, 
\[
d_w\Psi=d_wg(x',w)+\sigma(-dw_{2n+1}+d_{x'}g(w',y)),~\frac{\partial\psi}{\partial\sigma}=-w_{2n+1}+g(w',y),
\]
and with respect to $(w,\sigma)$, $\mathrm{Hess}(\Psi)$ is a matrix of the form
\[
\mathrm{Hess}(\Psi)=
\begin{bmatrix} 
\frac{\partial^2 }{\partial w^2}\left(g(x',w)+\sigma g(w',y)\right) & \bigr(\frac{\partial }{\partial w}(-w_{2n+1}+g(w',y))\bigr)^t\\
\frac{\partial }{\partial w}(-w_{2n+1}+g(w',y))& 0
\end{bmatrix}.
\]
Directly, at $(w,\sigma;x,y)=(0,1,0,0)\in\mathbb{R}^{n+1}\times\mathbb{R}_+\times\mathbb{R}^{2n+1}\times\mathbb{R}^{2n+1}$, 
$$
\Psi=0~\text{and}~d_{w,\sigma}\Psi=0
$$
and notice that $\Phi$ satisfies \eqref{e-gue201227yydp}, \eqref{e-gue201227yydq} and \eqref{e-gue201227yydr} (see \eqref{e-gue201228yyda}), we can compute that 
at $(w,\sigma;x,y)=(0,1,0,0)\in\mathbb{R}^{n+1}\times\mathbb{R}_+\times\mathbb{R}^{2n+1}\times\mathbb{R}^{2n+1}$, 
\begin{equation}\label{e-gue201228ycdIII}
    \det\mathrm{Hess}(\Psi)
      =\det
    \begin{bmatrix}
    2iI_{2n} & 0 & 0\\
        0    & 0 & -1\\
        0    & -1 & 0
    \end{bmatrix}\\
    =-\det\begin{bmatrix}
    2iI_{2n} & 0 & 0\\
        0    & 1 & 0\\
        0    & 0 & 1
    \end{bmatrix}\\
    =(-1)^{n+1}2^{2n},
\end{equation}
Hence, we can find a solution $\tilde{W}(x',y)$ and $\tilde{\Sigma}(x',y)$ near $0\in\mathbb{R}^{2n+1}$ and $1\in\mathbb{R}_+$ such that
\begin{equation}
    \label{critical value of Psi}
    \frac{\partial\tilde{\Psi}}{\partial\tilde{w}}(\tilde{W},\tilde{\Sigma},x,y)=\frac{\partial\tilde{\Psi}}{\partial\tilde{\sigma}}(\tilde{W},\tilde{\Sigma},x,y)=0.
\end{equation}
Note that $\tilde{W}(x,y)=\tilde{W}(x',y)$ and $\tilde{W}(x,y)=\tilde{\Sigma}(x',y)$ are independent of $x_{2n+1}$. So by the stationary phase theorem of Melin--Sj\"ostrand, we get
\begin{equation}
\label{integrand of Pi^2}
\begin{split}
&\int_0^\infty e^{it\Phi(x,y)}A(x,y,t)dt\\
&\equiv\iint_{D\times\mathbb{R}_+}
e^{it\Psi(w,\sigma;x,y)}{tA(x,w,t)A(w,y,t\sigma)}\lambda(w)dwd\sigma dt\\
&\equiv 
\int^{+\infty}_0e^{it\left(-x_{2n+1}+\tilde{g}\left(x',\tilde{W}(x',y)\right)\right)}{B}(x,y,t)dt,
\end{split}
\end{equation}
where 
\begin{equation}\label{e-gue2-1228yydII}
    {B}(x,y,t)\sim\sum_{j=0}^\infty {B}_j(x,y)t^{n-j}~\text{in}~S^n_{\mathrm{cl}}(D\times D\times\mathbb{R}_+),~{B}_j(x,y)\in \mathscr{C}^\infty(D\times D),\ \ \mbox{for every $j\in\mathbb N_0$}. 
\end{equation}
Since $\tilde{W}(x',y)$,  $\tilde{\Sigma}(x',y)$ and $A_j(x',y)$ are independent of $x_{2n+1}$, for every $j\in\mathbb{N}_0$, it is straightforward to see that up to $O(|x-y|^N)$, for every $N\in\mathbb N_0$, $B_j(x,y)$ to be independent of $x_{2n+1}$, for every $j\in\mathbb N_0$. Hence, 
        \begin{equation}\label{e-gue2-1228yydIII}
    {B}_j(x,y)={B}_j(x',y)+O(|x-y|^N),\ \ \mbox{for every $N\in\mathbb N$, $j\in\mathbb N_0$}. 
\end{equation}
Also, observe that
\begin{equation}\label{e-gue201228yydV}
\begin{split}
       B_0(0,0)
       &=\det\left(\frac{\mathrm{Hess}(\psi)}{2\pi i}\right)^{-\frac{1}{2}}A_0(0,0)^2\lambda(0)\\
       &=2\pi^{n+1}\left(\frac{1}{2\pi^{n+1}}\right)^2 \\
       &=\frac{1}{2\pi^{n+1}}\\
       &=A_0(0,0).
\end{split}
\end{equation}
For now $\Pi=\Pi^2$, we have
\begin{equation*}
    \int_0^\infty e^{it\left(-x_{2n+1}+g(x',y)\right)}A(x,y,t)dt\equiv\int_0^\infty e^{it\left(-x_{2n+1}+\tilde{g}\left(x',\tilde{W}(x',y)\right)\right)}B(x,y,t)dt.
\end{equation*}
Because of
\begin{equation*}
    A_0(x,y)B_0(x,y)\neq 0,
\end{equation*}
as in \cite[Section 8]{HM16}, we can show that
\begin{equation*}
\tilde{g}\left(x',\tilde{W}(x',y)\right)=g(x',y)+O(|x-y|^N),\ \ \mbox{for every $N\in\mathbb N_0$}. 
\end{equation*}
We may  replace $\tilde{g}\left(x',\tilde{W}(x',y)\right)$ by $g(x',y)$ and we have
\begin{equation}\label{e-gue201228yydVI}
    \int_0^\infty e^{it\Phi(x,y)}A(x,y,t)dt\equiv\int_0^\infty e^{it\Phi(x,y)}B(x,y,t)dt~\mathrm{mod}~\mathscr{C}^\infty(D\times D).
\end{equation}

 \begin{lemma}\label{l-gue201228yyd}
 With the notations used above, we have 
  \begin{equation}\label{e-gue2-1228yydIV}
    B_j(x,y)=A_j(x,y)+O(|x-y|^N),\ \ \mbox{for every $N\in\mathbb N$, $j\in\mathbb N_0$}. 
\end{equation}
 \end{lemma}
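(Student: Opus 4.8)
The plan is to compare the two symbols in \eqref{e-gue201228yydVI} directly, exploiting that both oscillatory integrals carry the \emph{same} phase $\Phi$. Put $C_j:=A_j-B_j$ and $C(x,y,t):=A(x,y,t)-B(x,y,t)\in S^n_{{\rm cl\,}}(D\times D\times\mathbb R_+)$, so that \eqref{e-gue201228yydVI} becomes $\int_0^\infty e^{it\Phi(x,y)}C(x,y,t)dt\equiv0\mod\C^\infty(D\times D)$. Exactly as in the derivation of \eqref{e-gue201228yydVII}, I would feed this into Lemma~\ref{lemma of osc} and \eqref{Gamma function} term by term, separating the genuine poles $(n-i)!\,C_i(-i(\Phi+i0))^{-(n-i+1)}$ (coming from $i\le n$) from the logarithmic contributions $\log(-i\Phi)\sum_{i>n}d_i\,C_i(-i\Phi)^{i-n-1}$ (coming from $i>n$, with constants $d_i$ from \eqref{Gamma function}). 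Clearing denominators by multiplying through by $(-i\Phi)^{n+1}$ yields, for $x\neq y$,
\begin{equation*}
\sum_{i=0}^n(n-i)!\,C_i(x,y)(-i\Phi(x,y))^i+\log(-i\Phi)\sum_{i>n}d_i\,C_i(x,y)(-i\Phi(x,y))^{i}=(-i\Phi(x,y))^{n+1}w(x,y)
\end{equation*}
for some $w\in\C^\infty(D\times D)$.

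The second step reads off the diagonal values. Following the device of Lemma~\ref{l-gue201224yyd}, I would restrict the displayed identity to the curve $x=(0,x_{2n+1})$, $y=0$, along which $\Phi=-x_{2n+1}+g(0,0)=-x_{2n+1}$ is real. By \eqref{e-gue201228yydd} and \eqref{e-gue2-1228yydIII} each $C_i$ is independent of $x_{2n+1}$ up to an $O(|x-y|^\infty)$ error, so along this curve $C_i((0,x_{2n+1}),0)=C_i(0,0)+O(|x_{2n+1}|^\infty)$. The left-hand side then splits into a polynomial of degree $\le n$ in $x_{2n+1}$, whose coefficient of $x_{2n+1}^i$ is a nonzero multiple of $C_i(0,0)$, plus a term that is $O(x_{2n+1}^{n+1}\log x_{2n+1})$, while the right-hand side is $O(x_{2n+1}^{n+1})$. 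Since a nonzero polynomial of degree $\le n$ cannot be $O(x_{2n+1}^{n+1}\log x_{2n+1})$, every coefficient vanishes, giving $C_i(0,0)=0$ for all $i\le n$. As the base point is arbitrary, $C_i(x,x)\equiv0$ on $D$ for $i\le n$; in particular $A_1(0,0)=B_1(0,0)$, which is what is needed for Theorem~\ref{main theorem}.

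To obtain the full statement I would then (i) reach the indices $i>n$ by inducting on the order of the symbol: once $C_0,\dots,C_n$ are known to be flat, $\sum_{i\le n}C_it^{n-i}$ contributes a smoothing integral, so $\int_0^\infty e^{it\Phi}\sum_{i>n}C_it^{n-i}dt\equiv0$; this is an order $-1$ symbol producing only logarithmic singularities, and $\log(-i\Phi)\cdot h\equiv0$ forces the smooth function $h=\sum_{i>n}d_iC_i\Phi^{i-n-1}$ to vanish to infinite order on the diagonal, from which $C_{n+1}(x,x)=0$, and the argument repeats with $n$ replaced by $n-1$; and (ii) upgrade the diagonal vanishing to the flatness $C_j=O(|x-y|^N)$ by differentiating the displayed identity in the $z,\ol z,w,\ol w$ and $y_{2n+1}$ directions and restricting to the diagonal, running the same Taylor-coefficient induction as in the proof of Theorem~\ref{a_0 is a constant} (the steps \eqref{e-gue201226yydg}--\eqref{e-gue201226ycdII}), where the rigidity comes from $\Phi=O(|x-y|)$, $d_x\Phi(x,x)=-\omega_0(x)\neq0$, and the transport relations that both $A$ and $B$ inherit from $\ddbar_b\Pi=0$.

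The main obstacle is step (ii): the displayed identity only encodes the vanishing of $\Phi^{n+1}$-divisible combinations and a priori permits cancellations among the $C_i$, so a bare order count does not yield infinite-order flatness. The crux is to organize the pole part and the logarithmic part as two independent families of conditions and to combine them with the $\ddbar_b$-transport equations inherited by both symbols, so that the Taylor induction closes at every order $N$; keeping the logarithmic remainders cleanly separated from the poles throughout the coefficient matching is the delicate bookkeeping point.
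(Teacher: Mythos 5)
There is a genuine gap here, and you have located it yourself: your step (ii) is exactly the part of the lemma that goes beyond what your argument delivers, and you do not prove it. Restricting the pole/log identity to the curve $x=(0,x_{2n+1})$, $y=0$ yields only the diagonal values $A_j(0,0)=B_j(0,0)$ for $j\le n$ (this much is correct, and it suffices for the identity $A_1(0,0)=B_1(0,0)$ invoked later), but the lemma asserts $B_j(x,y)-A_j(x,y)=O(|x-y|^N)$ for every $N$ and every $j\in\mathbb N_0$, i.e.\ agreement to infinite order off the diagonal. Your proposed upgrade --- rerunning the Taylor-coefficient induction of Theorem~\ref{a_0 is a constant} on the differences $C_j=A_j-B_j$ --- does not go through as described: that argument rests on the transport information that $\ddbar_b\Pi=0$ provides for the \emph{leading} coefficient, combined with its known constant diagonal value; for $j\ge1$ the corresponding transport equations couple $C_j$ to derivatives of all lower-order coefficients and are nowhere established for these differences. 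Your closing paragraph concedes precisely this, so the flatness assertion is left unproven.

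What you are missing is a much shorter device, for which you already have every ingredient on the table. By \eqref{e-gue201228yydd} and \eqref{e-gue2-1228yydIII}, both $A_j$ and $B_j$ are independent of $x_{2n+1}$ modulo $O(|x-y|^N)$. Your same-phase comparison, read off order by order (this is the analogue of \eqref{e-gue201228yydVIII}), gives
\[
B_j(x,y)-A_j(x,y)=h_j(x,y)\,\bigl(-x_{2n+1}+g(x',y)\bigr)+O(|x-y|^N),\qquad h_j\in\C^\infty(D\times D),
\]
once $C_0,\dots,C_{j-1}$ are already known to be flat. Now take an almost analytic extension in the single variable $x_{2n+1}$ and evaluate at $\tilde x_{2n+1}=g(x',y)$: the left-hand side is unchanged, since modulo flat errors it does not depend on $x_{2n+1}$; the factor $-\tilde x_{2n+1}+g(x',y)$ annihilates the main term on the right; and the flat errors stay flat because $g(x',y)-y_{2n+1}=O(|x'-y'|)$, so the substituted point lies within $O(|x'-y'|)$ of the diagonal. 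This yields $B_j-A_j=O(|x-y|^N)$ for all $N$ in one stroke --- no transport equations, no Taylor induction, no delicate separation of pole and log parts --- and iterating over $j$ (the same divisibility structure persists for $j>n$, where the logarithmic terms enter) proves the lemma in full. This substitution is exactly the paper's proof; your curve restriction $x=(0,x_{2n+1})$ is the special case of it in which only the diagonal value, rather than the whole germ along the zero set of $\Phi$, is retained.
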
 
 
 \begin{proof}
 We can repeat the procedure as in the discussion before \eqref{e-gue201228yydVII} and conclude that 
 \begin{equation}\label{e-gue201228yydVIII}
 B_0(x,y)-A_0(x,y)=h(x,y)(-x_{2n+1}+g(x',y))+O(|x-y|^N),\ \ \mbox{for every $N\in\mathbb N$, $j\in\mathbb N_0$},
 \end{equation}
 where $h(x,y)\in\C^\infty(D\times D)$. 
 Take almost analytic extension and $\tilde{x}_{2n+1}=g(x',y)$ in \eqref{e-gue201228yydVIII}, and notice that up to $O(|x-y|^N)$, for every $N\in\mathbb N_0$, $B_0(x,y)-A_0(x,y)$ is independent of $x_{2n+1}$, we conclude that 
 \begin{equation}\label{e-gue201228ycd}
 B_0(x,y)-A_0(x,y)=O(|x-y|^N),\ \ \mbox{for every $N\in\mathbb N$, $j\in\mathbb N_0$}.
 \end{equation}
 
 From \eqref{e-gue201228ycd},  we can repeat the procedure as in the discussion before \eqref{e-gue201228yydVII} again and conclude that 
 \begin{equation}\label{e-gue201228ycdI}
 B_1(x,y)-A_1(x,y)=h_1(x,y)(-x_{2n+1}+g(x',y))+O(|x-y|^N),\ \ \mbox{for every $N\in\mathbb N$, $j\in\mathbb N_0$},
 \end{equation}
 where $h_1(x,y)\in\C^\infty(D\times D)$. 
 Take almost analytic extension and $\tilde{x}_{2n+1}=g(x',y)$ in \eqref{e-gue201228ycdI}, and notice that up to $O(|x-y|^N)$, for every $N\in\mathbb N_0$, $B_1(x,y)-A_1(x,y)$ is independent of $x_{2n+1}$, we conclude that 
 \begin{equation*}
 B_1(x,y)-A_1(x,y)=O(|x-y|^N),\ \ \mbox{for every $N\in\mathbb N$, $j\in\mathbb N_0$}.
 \end{equation*}
Continuing in this way, the lemma follows. 
 \end{proof}

\subsection{Recursive formula between the first and the second coefficient}
From \eqref{integrand of Pi^2}, we see that 
\begin{equation}\label{e-gue201228ycdII}
\begin{split}
&t\iint_{D\times\mathbb{R}_+} e^{it(\Phi(0,w)+\sigma\Phi(w,0))}{A(0,w,t)A(w,0,t\sigma)}\lambda(w)dwd\sigma\\
&\sim B_0(0,0)t^n+B_1(0,0)t^{n-1}+\cdots.
\end{split}\end{equation}
In this section, we will from the the asymptotic expansion \eqref{e-gue201228ycdII} and Lemma~\ref{l-gue201228yyd} to get recursive formula between the first and the second coefficient.  

Now, let 
\[
F(w,\sigma):=\Phi(0,w)+\sigma\Phi(w,0)=g(0',w)+\sigma(-w_{2n+1}+g(w',0)).
\]
As \eqref{e-gue201228ycdIII} and the discussion before \eqref{e-gue201228ycdIII}, we 
have $(d_wF)(0,1)=0$, $(d_\sigma F)(0,1)=0$ and 
\begin{equation}
  \det\mathrm{Hess}(F)(0,1)
=\det\begin{bmatrix}
2iI_{2n} & 0 & 0\\
0 & 0  & -1\\
0 & -1 & 0
\end{bmatrix}=(-1)^{n+1}2^{n}  
\end{equation}
and
\begin{equation}
\mathrm{Hess}(F)^{-1}(0,1)
=\begin{bmatrix}
\frac{1}{2i}I_{2n} & 0 & 0\\
0 & 0  & -1\\
0 & -1 & 0
\end{bmatrix}
\end{equation}
and
\begin{equation}
    \left\langle \mathrm{Hess}(F)(0,1)^{-1}D,D\right\rangle=2i\sum_{j=1}^n\frac{\partial^2}{\partial z_j\partial\overline{z}_j}+2\frac{\partial^2}{\partial x_{2n+1}\partial\sigma},
\end{equation}
where $D:=\left(-i\partial_{x_1},\cdots,-i\partial_{x_{2n+1}},-i\partial_\sigma\right)^t$. 
By H\"ormander stationary phase formula Theorem \ref{Hormander stationary phase formula},
\begin{align}\label{e-gue201228ycda}
    {B}_0(0,0)t^n+{B}_1(0,0)t^{n-1}+\cdots
    &\sim t\iint_{D\times\mathbb{R}_+} e^{itF(w,\sigma)}{A(0,w,t)A(w,0,t\sigma)}\lambda(w)dwd\sigma\\
    &\sim e^{itF(0,1)}\det\left(\frac{t\mathrm{Hess}(F)(0,1)}{2\pi i}\right)^{-\frac{1}{2}}\sum_{j=0}^\infty t^{-j}P_j\\
    &\sim 2\pi^{n+1}(t^nP_0+t^{n-1}P_1+\cdots),
\end{align}
where
\begin{equation}
        P_0={A_0(0,0)^2\lambda(0)}
\end{equation}
and 
\begin{equation}
    \begin{split}
        P_1
    &=\sum_{0\leq\mu\leq 2}\frac{i^{-1}}{\mu!(\mu+1)!}\left(i\sum_{j=1}^n\frac{\partial^2}{\partial z_j\partial\overline{z}_j}+\frac{\partial^2}{\partial x_{2n+1}\partial\sigma}\right)^{\mu+1}\left(G^\mu(x,\sigma){A_0(0,x)A_0(x,0)\lambda(x)\sigma^n}\right)(0,1)\\
    &+2{A_0(0,0)A_1(0,0)\lambda(0)},
    \end{split}
\end{equation}
and
\begin{align*}
    G(x,\sigma)
    &:=F(x,\sigma)-F(0,1)-\frac{1}{2}\left\langle \mathrm{Hess}(F)(0,1)\begin{pmatrix}
x\\ \sigma-1
\end{pmatrix},\begin{pmatrix}
x\\ \sigma-1
\end{pmatrix}\right\rangle\\
&=F(x,\sigma)-\frac{1}{2}\left\langle \mathrm{Hess}(F)(0,1)\begin{pmatrix}
x\\ \sigma-1
\end{pmatrix},\begin{pmatrix}
x\\ \sigma-1
\end{pmatrix}\right\rangle
\end{align*}
satisfying 
\begin{equation}
\label{G vanishes to second order}
    \frac{\partial^\alpha G}{\partial x^{\alpha_1}\partial\sigma^{\alpha_2}}(0,1)=0\ \ \mbox{for all $\alpha_1\in\mathbb N^{2n+1}_0$, $\alpha_2\in\mathbb N_0$, $|\alpha|=|\alpha_1|+|\alpha_2|\leq 2$}.
\end{equation}
Also, from Proposition~\ref{phase function mathring phi} and \eqref{e-gue201228yyda}, we can find that
\begin{equation}
\label{G vanishes to third order in complex coordinates}
    \frac{\partial^\alpha G}{\partial x^\alpha}(0,1)=\left.\frac{\partial^\alpha}{\partial x^\alpha}\left(\Phi(0,x)+\Phi(x,0)\right)\middle|\right._{x=0}=0,\ \ \mbox{for all $\alpha\in\mathbb N^{2n+1}_0$, $|\alpha|=3$}.
\end{equation}
Also, observe that
\begin{equation}
\label{G vanishes from second order in sigma}
    \frac{\partial^\alpha G}{\partial\sigma^\alpha}(0,1)=0,\ \ \mbox{for all $\alpha\in\mathbb N_0$, $|\alpha|\geq2$}.
\end{equation}
We now calculate each terms in $P_1$:
For $\mu=0$ in the summation, the summand is   
    $$
    \frac{1}{i}\left(i\sum_{j=1}^n\frac{\partial^2}{\partial z_j\partial\overline{z}_j}+\frac{\partial^2}{\partial x_{2n+1}\partial\sigma}\right)\left({A_0(0,x)A_0(x,0)\lambda(x)\sigma^n}\right)(0,1);
    $$
for $\mu=1$ in the summation, the summand is    
    $$
    \frac{1}{2i}\left(-\sum_{j,k=1}^n\frac{\partial^4}{\partial z_j\partial\overline{z}_j\partial z_k\partial\overline{z}_k}+2i\sum_{j=1}^n\frac{\partial^4}{\partial z_j\partial\overline{z}_j\partial x_{2n+1}\partial\sigma}+\frac{\partial^4}{\partial x_{2n+1}^2\partial\sigma^2}\right)
    $$
    acting on
    $$G(x,\sigma){A_0(0,x)A_0(x,0)\lambda(x)\sigma^n}
    $$
    valuing at $(x,\sigma)=(0,1)$; and for $\mu=2$, the summand is  
    $$
    \frac{1}{12i}\left(-i\sum_{j,k,l=1}^n\frac{\partial^6}{\partial z_j\partial\overline{z}_j\partial z_k\partial\overline{z}_k\partial z_l\partial\overline{z}_l}-3\sum_{j,k=1}^n\frac{\partial^6}{\partial z_j\partial\overline{z}_j\partial z_k\partial\overline{z}_k\partial x_{2n+1}\partial\sigma}+3i\sum_{j=1}^n\frac{\partial^6}{\partial z_j\partial\overline{z}_j\partial x_{2n+1}^2\partial\sigma^2}+\frac{\partial^6}{\partial x_{2n+1}^3\partial\sigma^3}\right)
    $$
    acting on
    $$
G^2(x,\sigma){A_0(0,x)A_0(x,0)\lambda(x)\sigma^n}
    $$
    valuing at $(x,\sigma)=(0,1)$.  Thus, by  Proposition~\ref{geometric datum on X}, Proposition \ref{phase function mathring phi}, \eqref{e-gue201228yyda}, (\ref{G vanishes to second order}), (\ref{G vanishes to third order in complex coordinates}) and (\ref{G vanishes from second order in sigma}), also with \eqref{e-gue201228yydg} and Lemma~\ref{l-gue201228yyd}, it is straightforward to check that 
\begin{equation}
\label{P_1}
    \begin{split}
  &P_1=A_0(0,0)^2\left[\left(\sum_{j=1}^n\frac{\partial^2\lambda}{\partial z_j\partial\overline{z}_j}\right)(0)+\frac{i}{2}\lambda(0)\sum_{j,k=1}^n\frac{\partial^4(g(z,0)+g(0,z))}{\partial z_j\partial\overline{z}_j\partial z_k\partial\overline{z}_k}(0',0)\right]\\
    &+2{A_0(0,0)A_1(0,0)\lambda(0)}.
    \end{split}
\end{equation}
From \eqref{lambda(0)=1}, \eqref{e-gue201227yydq}, \eqref{e-gue201228yyda} and notice that $A_0(0,0)=\frac{1}{2\pi^{n+1}}$, we can rewrite \eqref{P_1}: 
\begin{equation}
\label{P_1=A_0(0,0)^2...}
\begin{split}
        P_1
        &=\frac{1}{(2\pi^{n+1})^2}\left[\sum_{j=1}^n\left(\frac{\partial^2\lambda}{\partial z_j\partial\overline{z}_j}\right)(0)+\sum_{j,k=1}^n\frac{\partial^4 R}{\partial z_j\partial\overline{z}_j\partial z_k\partial\overline{z}_k}(0)\right]+\frac{1}{\pi^{n+1}}A_1(0,0), 
\end{split}
\end{equation}
where $R$ is as in \eqref{e-gue201227yydq}. From Lemma~\ref{l-gue201228yyd} and \eqref{e-gue201228ycda}, we get 
\[A_1(0,0)=B_1(0,0)=2\pi^{n+1}P_1.\]
From this observation and \eqref{P_1=A_0(0,0)^2...}, we get 
\begin{equation}\label{e-gue201228ycdb}
    \begin{split}
    A_1(0,0)
    &={B}_1(0,0)\\
    &=2\pi^{n+1}P_1\\
    &=\frac{1}{2\pi^{n+1}}\left[\sum_{j=1}^n\left(\frac{\partial^2\lambda}{\partial z_j\partial\overline{z}_j}\right)(0)+\sum_{j,k=1}^n\frac{\partial^4 R}{\partial z_j\partial\overline{z}_j\partial z_k\partial\overline{z}_k}(0)\right]+2A_1(0,0).
    \end{split}
\end{equation}
So we need to calculate
\begin{equation}
    \label{local expression of a_1(0,0)}
    A_1(0,0)=-\frac{1}{2\pi^{n+1}}\left[\sum_{j=1}^n\left(\frac{\partial^2\lambda}{\partial z_j\partial\overline{z}_j}\right)(0)+\sum_{j,k=1}^n\frac{\partial^4 R}{\partial z_j\partial\overline{z}_j\partial z_k\partial\overline{z}_k}(0)\right]
\end{equation}
in the final subsection. 

\subsection{Calculation by geometric invariance}
In this section, we calculate each term in (\ref{local expression of a_1(0,0)}) by the geometric invariance on $X$. We will continue work with local coordinates $x=(x_1,\ldots,x_{2n+1})$ as \eqref{e-gue201226ycdVI}. We first calculate the Tanaka--Webster scalar curvature in terms of the coordinates $x=(x_1,\ldots,x_{2n+1})$. Recall that from Proposition \ref{geometric datum on X}
\begin{equation}
 \omega_0(x)=dx_{2n+1}-i\sum_{j=1}^n\left(\frac{\partial R}{\partial z_j}dz_j-\frac{\partial R}{\partial\overline{z}_j}d\overline{z}_j\right)+O(|x|^4),
\end{equation}
\begin{equation}
    d\omega_0(x)=2i\sum_{j,k=1}^n\frac{\partial^2 R}{\partial z_j\partial\overline{z}_k}dz_j\wedge d\overline{z}_k+O(|x|^3),
\end{equation}
\begin{equation}
    T(x)=-\frac{\partial}{\partial x_{2n+1}}+O(|x|^2),
\end{equation}
\begin{equation}
\begin{split}
    &T^{1,0}_x X=\mathrm{span}\left\{L_j\right\}_{j=1}^n:=\mathrm{span}\left\{\frac{\partial}{\partial z_j}+i\frac{\partial R}{\partial z_j}\frac{\partial}{\partial x_{2n+1}}+O(|x|^4)\right\}_{j=1}^n,\\
    &L_j=\frac{\partial}{\partial z_j}+i\frac{\partial R}{\partial z_j}\frac{\partial}{\partial x_{2n+1}}+O(|x|^4),\ \ j=1,\ldots,n. 
    \end{split}
\end{equation}
Write $\nabla_{L_i}L_j=\Gamma^l_{ij}L_l$, where $\nabla$ denotes the Tanaka-Webster connection (see Proposition~\ref{p-gue201228yyds}).  Recall that from \cite{Tanaka1975SPCDG}*{Lemma 3.2},
\begin{equation}
\label{Cartan magic formula}
    d\omega_0\left(\nabla_{L_i}L_j,\overline{L}_k\right)=L_i(d\omega_0(L_j,\overline{L}_k))-d\omega_0\left(L_j,[L_i,\overline{L}_k]_{T^{0,1}}\right).
\end{equation}
Directly,
\begin{equation}\label{e-gue201228ycdc}
    \begin{split}
     d\omega_0\left(\nabla_{L_i}L_j,\overline{L}_k\right)
    &=d\omega_0\left(\Gamma^l_{ij}L_l,\overline{L}_k\right)=2i\Gamma_{ij}^l\frac{\partial^2 R}{\partial z_l\partial\overline{z}_k}+O\left(|x|^3\right),
\end{split}
\end{equation}
\begin{equation}\label{e-gue201228ycdd}
    L_i\left(d\omega_0\left(L_j,\overline{L}_k\right)\right)=2\left(i\frac{\partial^3 R}{\partial z_i\partial z_j\partial\overline{z}_k}-\frac{\partial R}{\partial z_i}\frac{\partial^3 R}{\partial x_{2n+1}\partial z_j\partial\overline{z}_k}\right)+O\left(|x|^3\right),
\end{equation}
\[
\begin{split}
    [L_i,\overline{L}_k]
    &=\left[\frac{\partial}{\partial z_i}+i\frac{\partial R}{\partial z_i}\frac{\partial}{\partial x_{2n+1}}+O(|x|^4),\frac{\partial}{\partial \overline{z}_k}-i\frac{\partial R}{\partial\overline{z}_k}\frac{\partial}{\partial x_{2n+1}}+O(|x|^4)\right]\\
    &=\left(\frac{\partial R}{\partial z_i}\frac{\partial^2 R}{\partial\overline{z}_k\partial x_{2n+1}}-\frac{\partial R}{\partial\overline{z}_k}\frac{\partial^2 R}{\partial z_i\partial x_{2n+1}}-2i\frac{\partial^2 R}{\partial z_i\partial\overline{z}_k}\right)\frac{\partial}{\partial x_{2n+1}}+O(|x|^3),
\end{split}
\]
and hence
\begin{equation}\label{e-gue201228ycde}
   d\omega_0\left(L_j,[L_i,\overline{L}_k]_{T^{0,1}}\right)=O(|x|^3). 
\end{equation}
Accordingly, by (\ref{derivative of R at 0}), for all $i,j,k=1,\cdots,n$,
\begin{equation}
    \label{Christoffel symbol}
    \Gamma^k_{ij}(0)=0.
\end{equation}
Moreover, by taking $\frac{\partial}{\partial\overline{z}_h}$ both sides in (\ref{Cartan magic formula}), from (\ref{derivative of R at 0}), \eqref{e-gue201228ycdc}, \eqref{e-gue201228ycdd} and \eqref{e-gue201228ycde}, it is not difficult to check that 
\begin{equation}
    \label{derivative of Christoffel symbol}
    \frac{\partial\Gamma^k_{ij}}{\partial\overline{z}_h}(0)=-2\frac{\partial^4 R}{\partial z_i\partial z_j\partial\overline{z}_k\partial\overline{z}_h}(0).
\end{equation}
Now, let $\{\theta^\alpha\}_{j=1}^n$ and $\{\theta^{\overline{\beta}}\}_{j=1}^n$ be the dual frame of $\{L_\alpha\}_{j=1}^n$ and $\{\overline{L}_\beta\}_{j=1}^n$, respectively.
Denote
\[
\nabla L_\alpha=\omega_\alpha^\beta\otimes L_\beta,
\]
and we can check that
the $(1,1)$ part of $d\omega_\alpha^\beta$ is
\[
-\sum_{k,l=1}^n \left(\overline{L}_l\Gamma^\beta_{k\alpha})\right)\theta^k\wedge\theta^{\overline{l}}+O(|x|),
\]
and the $(1,1)$ part of $\Theta_\alpha^\beta=d\omega_\alpha^\beta-\omega_\alpha^\gamma\wedge\omega_\gamma^\beta$ denoted by
\[
\sum_{k,l=1}^n R_{\alpha k\overline{l}}^\beta\theta^k\wedge\theta^{\overline{l}}
\]
equals the $(1,1)$ part ot $d\omega_\alpha^\beta$. Hence the pseudohermitian Ricci curvature tensor at origin is 
\[
R_{\alpha\overline{l}}(0)=\sum_{k=\beta=1}^n R_{\alpha k\overline{l}}^\beta(0)=-\sum_{k=\beta=1}^n\frac{\partial\Gamma^\beta_{k\alpha}}{\partial\overline{z}_l}(0)=2\sum_{k=1}^n\frac{\partial^4 R}{\partial z_k\partial\overline{z}_k\partial z_\alpha\partial\overline{z}_l}(0).
\]
Also, for
\[
-d\omega_0=ig_{\alpha\overline{\beta}}\theta^\alpha\wedge\theta^{\overline{\beta}},
\]
we can find that $\theta^\alpha(0)=dz_\alpha$ and $\theta^{\overline{\beta}}(0)=d\overline{z}_\beta$, and 
\[
g_{\alpha\overline{\beta}}(0)=\delta_{\alpha\beta}.
\]
Let $g^{\overline{c}d}$ be the inverse matrix of $g_{a\overline{b}}$. We have 
 $g^{\overline{c}d}(0)=\delta_{cd}$ and the Tanaka--Webster scalar curvature at the origin is
\begin{equation}
\label{Tanaka--Webster Scalar curvature at origin}
        R_{\mathrm{scal}}(0)
    =g^{\overline{l}\alpha}R_{\alpha\overline{l}}(0)
    =2\sum_{l=1}^n\sum_{k=1}^n\frac{\partial^4 R}{\partial z_l\partial\overline{z}_l\partial z_k\partial\overline{z}_k}(0).
\end{equation}
Finally, for the volume form 
$$
\lambda(x)dx:=\frac{1}{n!}\left(\left(\frac{-d\omega_0}{2}\right)^n\wedge\omega_0\right),
$$
we have the expression
\begin{equation}
\begin{split}
~
        &\frac{1}{n!}\left(\left(-\sum_{j,k=1}^n i\frac{\partial^2 R}{\partial z_j\partial\overline{z}_k}dz_j\wedge d\overline{z}_k+O\left(|x|^3\right)\right)^n\wedge\left(dx_{2n+1}-i\sum_{j=1}^n\left(\frac{\partial R}{\partial z_j}dz_j-\frac{\partial R}{\partial\overline{z}_j}d\overline{z}_j\right)+O\left(|x|^4\right)\right)\right)\\
        &=\frac{1}{n!}\left(\left(\sum_{j,k=1}^n -2\frac{\partial^2 R}{\partial z_j\partial\overline{z}_k}\frac{dz_j\wedge d\overline{z}_k}{-2i}+O\left(|x|^3\right)\right)^n\wedge\left(dx_{2n+1}-i\sum_{j=1}^n\left(\frac{\partial R}{\partial z_j}dz_j-\frac{\partial R}{\partial\overline{z}_j}d\overline{z}_j\right)+O\left(|x|^4\right)\right)\right).
\end{split}
\end{equation}
From Proposition \ref{geometric datum on X}, we can check that 
\begin{equation}
\label{complex laplacian on volume at origin}
    \frac{\partial^2\lambda}{\partial z_l\partial\overline{z}_l}(0)=(-2)^n\left(-\frac{1}{2}\right)^{n-1}\sum_{j=1}^n\frac{\partial^4 R}{\partial z_l\partial\overline{z}_l\partial z_j\partial\overline{z}_j}(0)=-2\sum_{j=1}^n\frac{\partial^4 R}{\partial z_l\partial\overline{z}_l\partial z_j\partial\overline{z}_j}(0).
\end{equation}
From \eqref{local expression of a_1(0,0)}, \eqref{Tanaka--Webster Scalar curvature at origin} and \eqref{complex laplacian on volume at origin}, 
we conclude that
\begin{equation}
\label{A_1(0,0)}
    A_1(0,0)=\frac{1}{4\pi^{n+1}}{R_{\mathrm{scal}}(0)}.
\end{equation}
Thus, the proof for (2) in Theorem \ref{main theorem} is completed for the point-wise equation (\ref{A_1(0,0)}) holds for all $x_0\in D$ and $R_{\mathrm{scal}}$ is globally well-defined on whole $X$, and in particular on $D$.
\section*{Acknowledgement}
The authors would like to thank George Marinescu for  several discussions about this work. 

\bibliographystyle{plain}

\end{document}